
\documentclass[12pt, a4paper]{amsart}
\usepackage{amssymb,latexsym}
\usepackage{epic}
\begin{document}




\title[Cone of Betti diagrams of bigraded artinian modules]
{The cone of Betti diagrams of 
bigraded artinian modules of codimension two}

\author{Mats Boij}
\address{Institusjonen for Matematik, KTH \\
 S-100 44 Stockholm \\
Sweden}


\author{Gunnar Fl{\o}ystad}
\address{Matematisk Institutt\\
         Johs. Brunsgt. 12\\
         5008 Bergen \\
         Norway}
\email{gunnar@mi.uib.no}

\keywords{Betti diagram, artinian, pure resolution, codimension two,
positive cone.}
\subjclass[2000]{Primary: 13D02 ; Secondary: 13C14}
\date{\today}


\theoremstyle{plain}
\newtheorem{theorem}{Theorem}[section]
\newtheorem{corollary}[theorem]{Corollary}
\newtheorem*{main}{Main Theorem}
\newtheorem{lemma}[theorem]{Lemma}
\newtheorem{proposition}[theorem]{Proposition}
\newtheorem{conjecture}[theorem]{Conjecture}
\newtheorem*{theoremA}{Theorem}
\newtheorem*{theoremB}{Theorem B}

\theoremstyle{definition}
\newtheorem{definition}[theorem]{Definition}

\theoremstyle{remark}
\newtheorem{notation}[theorem]{Notation}
\newtheorem{remark}[theorem]{Remark}
\newtheorem{example}[theorem]{Example}
\newtheorem{claim}{Claim}


\newcommand{\psp}[1]{{{\bf P}^{#1}}}
\newcommand{\psr}[1]{{\bf P}(#1)}
\newcommand{\op}{{\mathcal O}}
\newcommand{\opw}{\op_{\psr{W}}}
\newcommand{\go}{\op}

\newcommand{\ini}[1]{\text{in}(#1)}
\newcommand{\gin}[1]{\text{gin}(#1)}
\newcommand{\kr}{{\Bbbk}}
\newcommand{\kk}{{\Bbbk}}
\newcommand{\pd}{\partial}
\newcommand{\vardel}{\partial}
\renewcommand{\tt}{{\bf t}}


\newcommand{\coh}{{{\text{{\rm coh}}}}}


\newcommand{\modv}[1]{{#1}\text{-{mod}}}
\newcommand{\modstab}[1]{{#1}-\underline{\text{mod}}}

\newcommand{\sut}{{}^{\tau}}
\newcommand{\sumit}{{}^{-\tau}}
\newcommand{\til}{\thicksim}

\newcommand{\totp}{\text{Tot}^{\prod}}
\newcommand{\dsum}{\bigoplus}
\newcommand{\dprod}{\prod}
\newcommand{\lsum}{\oplus}
\newcommand{\lprod}{\Pi}

\newcommand{\La}{{\Lambda}}
\newcommand{\lam}{{\lambda}}
\newcommand{\GL}{{GL}}

\newcommand{\sirstj}{\circledast}

\newcommand{\she}{\EuScript{S}\text{h}}
\newcommand{\cm}{\EuScript{CM}}
\newcommand{\cmd}{\EuScript{CM}^\dagger}
\newcommand{\cmri}{\EuScript{CM}^\circ}
\newcommand{\cler}{\EuScript{CL}}
\newcommand{\clerd}{\EuScript{CL}^\dagger}
\newcommand{\clerri}{\EuScript{CL}^\circ}
\newcommand{\gor}{\EuScript{G}}
\newcommand{\gF}{\mathcal{F}}
\newcommand{\gG}{\mathcal{G}}
\newcommand{\gM}{\mathcal{M}}
\newcommand{\gE}{\mathcal{E}}
\newcommand{\gD}{\mathcal{D}}
\newcommand{\gI}{\mathcal{I}}
\newcommand{\gP}{\mathcal{P}}
\newcommand{\gK}{\mathcal{K}}
\newcommand{\gL}{\mathcal{L}}
\newcommand{\gS}{\mathcal{S}}
\newcommand{\gC}{\mathcal{C}}
\newcommand{\gO}{\mathcal{O}}
\newcommand{\gJ}{\mathcal{J}}
\newcommand{\gU}{\mathcal{U}}
\newcommand{\mm}{\mathfrak{m}}

\newcommand{\dlim} {\varinjlim}
\newcommand{\ilim} {\varprojlim}

\newcommand{\CM}{\text{CM}}
\newcommand{\Mon}{\text{Mon}}


\newcommand{\Kom}{\text{Kom}}


\newcommand{\EH}{{\mathbf H}}
\newcommand{\res}{\text{res}}
\newcommand{\Hom}{\text{Hom}}
\newcommand{\inhom}{{\underline{\text{Hom}}}}
\newcommand{\Ext}{\text{Ext}}
\newcommand{\Tor}{\text{Tor}}
\newcommand{\ghom}{\mathcal{H}om}
\newcommand{\gext}{\mathcal{E}xt}
\newcommand{\id}{\text{{id}}}
\newcommand{\im}{\text{im}\,}
\newcommand{\codim} {\text{codim}\,}
\newcommand{\resol}{\text{resol}\,}
\newcommand{\rank}{\text{rank}\,}
\newcommand{\lpd}{\text{lpd}\,}
\newcommand{\coker}{\text{coker}\,}
\newcommand{\supp}{\text{supp}\,}
\newcommand{\Ad}{A_\cdot}
\newcommand{\Bd}{B_\cdot}
\newcommand{\Fd}{F_\cdot}
\newcommand{\Gd}{G_\cdot}


\newcommand{\sus}{\subseteq}
\newcommand{\sups}{\supseteq}
\newcommand{\pil}{\rightarrow}
\newcommand{\vpil}{\leftarrow}
\newcommand{\rpil}{\leftarrow}
\newcommand{\lpil}{\longrightarrow}
\newcommand{\inpil}{\hookrightarrow}
\newcommand{\pils}{\twoheadrightarrow}
\newcommand{\projpil}{\dashrightarrow}
\newcommand{\dotpil}{\dashrightarrow}
\newcommand{\adj}[2]{\overset{#1}{\underset{#2}{\rightleftarrows}}}
\newcommand{\mto}[1]{\stackrel{#1}\longrightarrow}
\newcommand{\vmto}[1]{\overset{\tiny{#1}}{\longleftarrow}}
\newcommand{\mtoelm}[1]{\stackrel{#1}\mapsto}

\newcommand{\eqv}{\Leftrightarrow}
\newcommand{\impl}{\Rightarrow}

\newcommand{\iso}{\cong}
\newcommand{\te}{\otimes}
\newcommand{\into}[1]{\hookrightarrow{#1}}
\newcommand{\ekv}{\Leftrightarrow}
\newcommand{\equi}{\simeq}
\newcommand{\isopil}{\overset{\cong}{\lpil}}
\newcommand{\equipil}{\overset{\equi}{\lpil}}
\newcommand{\ispil}{\isopil}
\newcommand{\vvi}{\langle}
\newcommand{\hvi}{\rangle}
\newcommand{\susneq}{\subsetneq}
\newcommand{\sgn}{\text{sign}}


\newcommand{\xd}{\check{x}}
\newcommand{\ortog}{\bot}
\newcommand{\tL}{\tilde{L}}
\newcommand{\tM}{\tilde{M}}
\newcommand{\tH}{\tilde{H}}
\newcommand{\tvH}{\widetilde{H}}
\newcommand{\tvh}{\widetilde{h}}
\newcommand{\tV}{\tilde{V}}
\newcommand{\tS}{\tilde{S}}
\newcommand{\tT}{\tilde{T}}
\newcommand{\tR}{\tilde{R}}
\newcommand{\tf}{\tilde{f}}
\newcommand{\ts}{\tilde{s}}
\newcommand{\tp}{\tilde{p}}
\newcommand{\tr}{\tilde{r}}
\newcommand{\tfst}{\tilde{f}_*}
\newcommand{\empt}{\emptyset}
\newcommand{\bfa}{{\bf a}}
\newcommand{\bfb}{{\bf b}}
\newcommand{\bfd}{{\bf d}}
\newcommand{\bfe}{{\bf e}}
\newcommand{\bfp}{{\bf p}}
\newcommand{\bfc}{{\bf c}}
\newcommand{\bfl}{{\bf t}}
\newcommand{\la}{\lambda}
\newcommand{\bfen}{{\mathbf 1}}
\newcommand{\ep}{\epsilon}
\newcommand{\en}{p}
\newcommand{\tu}{q}
\newcommand{\fel}{m}

\newcommand{\ome}{\omega_E}

\newcommand{\bevis}{{\bf Proof. }}
\newcommand{\demofin}{\qed \vskip 3.5mm}
\newcommand{\nyp}[1]{\noindent {\bf (#1)}}
\newcommand{\demo}{{\it Proof. }}
\newcommand{\demodone}{\demofin}
\newcommand{\parg}{{\vskip 2mm \addtocounter{theorem}{1}  
                   \noindent {\bf \thetheorem .} \hskip 1.5mm }}

\newcommand{\red}{{\text{red}}}
\newcommand{\lcm}{{\text{lcm}}}


\newcommand{\dl}{\Delta}
\newcommand{\cdel}{{C\Delta}}
\newcommand{\cdelp}{{C\Delta^{\prime}}}
\newcommand{\dlst}{\Delta^*}
\newcommand{\Sdl}{{\mathcal S}_{\dl}}
\newcommand{\lk}{\text{lk}}
\newcommand{\lkd}{\lk_\Delta}
\newcommand{\lkp}[2]{\lk_{#1} {#2}}
\newcommand{\del}{\Delta}
\newcommand{\delr}{\Delta_{-R}}
\newcommand{\dd}{{\dim \del}}

\renewcommand{\aa}{{\bf a}}
\newcommand{\bb}{{\bf b}}
\newcommand{\cc}{{\bf c}}
\newcommand{\xx}{{\bf x}}
\newcommand{\yy}{{\bf y}}
\newcommand{\zz}{{\bf z}}
\newcommand{\mv}{{\xx^{\aa_v}}}
\newcommand{\mF}{{\xx^{\aa_F}}}

\newcommand{\pnm}{{\bf P}^{n-1}}
\newcommand{\opnm}{{\go_{\pnm}}}
\newcommand{\ompnm}{\omega_{\pnm}}

\newcommand{\pn}{{\bf P}^n}
\newcommand{\hele}{{\mathbb Z}}
\newcommand{\nat}{{\mathbb N}}
\newcommand{\rasj}{{\mathbb Q}}

\newcommand{\dt}{{\displaystyle \cdot}}
\newcommand{\st}{\hskip 0.5mm {}^{\rule{0.4pt}{1.5mm}}}              
\newcommand{\disk}{\scriptscriptstyle{\bullet}}

\newcommand{\cF}{F_\dt}
\newcommand{\pol}{f}

\newcommand{\disc}{\circle*{5}}

\def\CC{{\mathbb C}}
\def\GG{{\mathbb G}}
\def\ZZ{{\mathbb Z}}
\def\NN{{\mathbb N}}
\def\RR{{\mathbb R}}
\def\OO{{\mathbb O}}
\def\QQ{{\mathbb Q}}
\def\VV{{\mathbb V}}
\def\PP{{\mathbb P}}
\def\EE{{\mathbb E}}
\def\FF{{\mathbb F}}
\def\AA{{\mathbb A}}

\begin{abstract}
We describe the positive cone generated by bigraded Betti diagrams
of artinian modules of codimension two, whose resolutions become 
pure of a given type when taking total degrees. 
If the differences of these total degrees, $p$ and $q$,  are relatively
prime, the extremal rays are parametrised by order
ideals in ${\mathbb N}^2$ contained in the region
$px + qy < (p-1)(q-1)$. We also consider some examples concerning
artinian modules of codimension three.
\end{abstract}

\maketitle

\section*{Introduction}

In \cite{EFW}, D.Eisenbud, J.Weyman, and the second author gave
for every sequence of integers $\bfd : d_0 < d_1 < \cdots < d_n$
 a construction
of pure resolutions of graded artinian modules over a polynomial ring
$S = \kk[x_1, \ldots, x_n]$ (char $\kk = 0$)
\[ S(-d_0)^{\beta_0} \vpil S(-d_1)^{\beta_1} 
\vpil \ldots \vpil S(-d_n)^{\beta_n}. \]
Moreover these resolutions were $\GL(n)$-equivariant, and so
in particular invariant for the diagonal matrices and hence $\hele^n$-graded.

In the case when $S = \kk[x_1,x_2]$, the first author and J.S\"oderberg
in \cite[Remark 3.2]{BS} gave a different construction of pure resolutions of artinian
bigraded modules. It had a bigraded Betti diagram distinct from that of the
equivariant resolution.

\begin{example}
Suppose
$d_1 - d_0 = 2$ and $d_2 - d_1 = 3$. 
The equivariant resolution has the following form where we have written
the bidegrees of the generators below the terms.
\begin{equation} \label{IntroLigEkvi}
\underset{\scriptsize{\begin{matrix} (2,0) \\ (1,1) \\ (0,2) \end{matrix}}} {S^3} \vpil 
\underset{\scriptsize{\begin{matrix} (4,0) \\ (3,1) \\ (2,2) \\ (1,3) \\ (0,4) \end{matrix}}}
{S^5} \vpil
\underset{\scriptsize{\begin{matrix} (4,3) \\ (3,4) \end{matrix}}} {S^2}.
\end{equation}
Let $\beta_1$ be its bigraded Betti table.
The resolution in \cite{BS} is of a quotient of a pair of monomial ideals. 
For the type above the resolution has the following
bidegrees.
\begin{equation} \label{IntroLigBS}
\underset{\scriptsize{\begin{matrix} (4,0) \\ (2,2) \\ (0,4) \end{matrix}}} 
{S^3} \vpil 
\underset{\scriptsize{\begin{matrix} (6,0) \\ (4,2) \\ (3,3) \\ (2,4) \\ (0,6) 
\end{matrix}}} {S^5} \vpil
\underset{\scriptsize{\begin{matrix} (6,3) \\ (3,6) \end{matrix}}} {S^2}.
\end{equation}
Denote by $\beta_2$ be its Betti diagram.
\end{example}

This indicated that there may be many types of multigraded
Betti diagrams of ${\hele}^n$-graded artinian modules of codimension $n$
whose resolutions become pure of a given type when taking total degrees.
In \cite{Fl} the second author showed that the multigraded Betti
diagram of the equivariant resolution has a fundamental position. This
diagram and its twists with $\bfa \in \hele^n$ form a basis for the linear space
generated by  multigraded Betti diagrams of artinian $\hele^n$-graded modules whose 
resolutions become pure of the given type when taking total degrees. Even more natural
it is to describe the positive cone generated by the multigraded Betti diagrams.

In this paper we to this in the case when $S = \kk[x_1, x_2]$. Let 
$e_1 = d_1 - d_0$ and $e_2 = d_2 - d_1$. 
We describe all the extremal rays of the positive cone $P(e_1,e_2)$ generated
by bigraded Betti diagrams of artinian bigraded modules of codimension two whose
resolutions become pure when taking total degrees, and where the differences
of these total degrees are $e_1$ and $e_2$. In the example above the two
resolutions, or rather their Betti diagrams, are essentially the full story in 
the sense that the extremal rays in 
$P(2,3)$ are exactly the rays generated by
$\beta_1(\bfa)$ and $\beta_2(\bfa)$ for $\bfa \in \hele^2$. 
To explain the general situation assume here for simplicity that
$e_1$ and $e_2$ are relatively prime. 
Let $R(e_1,e_2)$ be the integer coordinate points 
in the region of the first quadrant of the coordinate plane bounded by the line
$e_1 x + e_2 y < (e_1 - 1)(e_2 - 1)$. There is a partial order on 
$\nat^2$ given by $(a_1,a_2) \leq (b_1, b_2)$ if $a_1 \leq a_2$ and
$b_1 \leq b_2$, and the region $R(e_1,e_2)$ inherits this. An order
ideal in $R(e_1,e_2)$ corresponds to a partition $\la$. 
We give a construction which to each  partition $\la$ in $R(e_1,e_2)$
associates a bigraded resolution 
\[ S^{e_2} \vpil S^{e_2 + e_1} \vpil S^{e_1}. \]
Let $\beta_\la$ be the bigraded Betti diagram of this complex.
The following is our main result
in the case that $e_1$ and $e_2$ are relatively prime.

\begin{theoremA}
The extremal rays in the cone $P(e_1,e_2)$ are the $\beta_\la(\bfa)$
where $\bfa$ varies over $\hele^2$ and $\la$ ranges over partitions
contained in the region $R(e_1,e_2)$. 
\end{theoremA}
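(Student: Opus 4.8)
The plan is to attack this in two halves: first show that every $\beta_\la(\bfa)$ is extremal in $P(e_1,e_2)$, and then show there are no other extremal rays. For the second half, the strategy is to produce, for an arbitrary bigraded artinian module $M$ of codimension two whose resolution becomes pure of the prescribed type, an explicit expression of its Betti diagram as a nonnegative rational combination of the $\beta_\la(\bfa)$. This is the natural analogue of Boij--S\"oderberg decomposition, but here the grading is by $\hele^2$ rather than $\hele$, and the ``chain'' structure used in the graded case is replaced by the poset of partitions in $R(e_1,e_2)$.

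First I would set up coordinates on the ambient vector space. A diagram in $P(e_1,e_2)$ is supported, in homological degree $i$, on bidegrees summing to a fixed total degree $d_i$, so after fixing $\bfa$ (i.e. fixing $d_0$) the relevant data in homological degree $0$ is a function $\nat \to \rasj_{\geq 0}$ recording the multiplicity of $S(-j, d_0-j)$, and similarly in degrees $1$ and $2$; the alternating-sum (Euler characteristic) conditions in each $\hele^2$-degree cut this down. I would first pin down exactly which vectors of multiplicities occur: the key structural fact to extract from the \cite{BS}-style and equivariant constructions is that the generator bidegrees of any such module, suitably sorted, interlace in a way governed by a partition-like staircase. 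Concretely I would show that the ``consecutive cancellation'' / positivity constraints force the generating degrees to sit above a staircase, and that the staircase is constrained to $R(e_1,e_2)$ precisely because going outside that region would force a syzygy degree that violates minimality of the resolution — this is where the bound $e_1 x + e_2 y < (e_1-1)(e_2-1)$ enters, and I expect the Frobenius/numerical-semigroup flavor of that inequality (it is $\dim_\kk \kk[t^{e_1},t^{e_2}]$-related, via $(e_1-1)(e_2-1)/2$ being the genus) to be exactly what makes relative primality the clean case.

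The extremality half I would do by a dimension/support argument: the diagram $\beta_\la(\bfa)$ has a support pattern (which bidegrees appear) that is minimal among attainable ones in a suitable sense, so it cannot be written as a sum of two non-proportional diagrams in the cone without one of them having a forbidden negative entry; alternatively, exhibit for each $\la$ a supporting linear functional on $P(e_1,e_2)$ that vanishes exactly on the ray through $\beta_\la(\bfa)$. I'd build that functional from the same ``pairing with a pure diagram of complementary type'' idea that yields the Boij--S\"oderberg supporting hyperplanes, adapted bigraded-wise. For the decomposition (no other rays) half, given $M$ I would run a greedy algorithm: among the partitions $\la$ with $\beta_\la$ ``fitting under'' the Betti diagram of $M$ in the poset order, pick the maximal one, subtract the largest multiple $c\,\beta_\la(\bfa)$ keeping all entries $\geq 0$, and argue the remainder is still the Betti diagram (up to scalar) of an admissible module class and has strictly smaller support, so the process terminates; nonnegativity of the final coefficients is automatic from the stopping rule.

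The main obstacle I expect is the decomposition step — specifically proving that the greedy subtraction always leaves a diagram that is again a nonnegative combination of admissible Betti diagrams (equivalently, that after peeling off $c\,\beta_\la(\bfa)$ one does not get ``stuck'' with a diagram on the boundary of the cone that is not itself reachable). In the singly-graded Boij--S\"oderberg theory this is handled by the totally-ordered chain of degree sequences; here the poset of partitions in $R(e_1,e_2)$ is only a partial order, so I would need a lemma saying that the set of $\la$ admissible ``below'' a given diagram has a unique maximal element at each stage (a lattice-type property of the relevant sub-poset), and that the two $\hele^2$-grading Euler relations plus positivity are enough to certify the remainder. The relatively-prime hypothesis should make this poset behave well; the non-coprime case (only sketched via examples in the paper) presumably fails this uniqueness, which is why it's excluded from Theorem A.
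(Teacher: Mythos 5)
Your plan has the right two halves, but the way you propose to carry out the decomposition half contains the genuine gap, and you omit an ingredient the theorem cannot do without. The paper does not decompose module diagrams into module diagrams. It first replaces $P(e_1,e_2)$ by the cone $P'(e_1,e_2)$ cut out by the multigraded Herzog--K\"uhl equations, and uses the result of \cite{Fl} (Theorem \ref{LinbettiTheMain}, Corollary \ref{SetCorLinpoly}, Lemma \ref{SetLemB}) to identify $P'$ with pairs of one-variable polynomials $(A(t),B(t))$ with nonnegative coefficients satisfying $A(t)\xi_q(t^m)=B(t)\xi_p(t^m)$. In that model the peeling argument is purely combinatorial: one extracts $A_{\min},B_{\min}$ (the extreme term in each residue class), proves they are indexed by a pair of dual partitions in $R(p,q)$ (Proposition \ref{PosconePropABmin}), and subtracts a multiple of $(A_T,B_T)$; the remainder automatically stays in $P'$ because all that must be preserved is nonnegativity of coefficients and the polynomial identity. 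Your version, by contrast, requires the remainder after each greedy subtraction to be (a multiple of) the Betti diagram of an actual module, which is exactly the statement you are trying to prove and is not something the two Euler relations plus positivity will certify; and your hoped-for rescue lemma (a unique maximal admissible $\lambda$ at each stage, a lattice property of the poset) is a dead end: the paper explicitly notes (Remark after Theorem \ref{PosconeTheMain}) that the decomposition is in general \emph{not} unique, so no such uniqueness holds, nor is it needed.

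The second omission is existence. Having decomposed $P'$, one still must show that every candidate extremal diagram $\beta_\lambda(\bfa)$ is realized by a resolution of a bigraded artinian module; otherwise you only control $P'$, not $P$, and you cannot even assert that $\beta_\lambda(\bfa)$ lies in $P$. This is the content of Section \ref{EksisSec} (Proposition \ref{EksisPropTreB} and Theorem \ref{EksisTheMain}): one builds the differentials as generic bihomogeneous matrices supported on strict thick diagonals, checks codimension-two degeneracy of $\coker\alpha$ and $\coker\beta^{\vee}$, and produces $\beta$ from kernel vectors of $\alpha$. Your proposal never addresses this, and your staircase heuristics about generator degrees and the inequality $e_1x+e_2y<(e_1-1)(e_2-1)$ (Lemma 2.2 in the paper is where that region actually enters, via $aq-p\lambda_{p-1-a}\geq 0$, i.e.\ the exponents of $A_T$ being nonnegative) do not substitute for it. Finally, your supporting-hyperplane route to extremality is unnecessary: once every element of $P\subseteq P'$ is a nonnegative combination of the $\beta_\lambda(\bfa)$, extremality of each such ray follows from the $0/1$, one-term-per-residue-class structure of $A_T$ and $B_T$, without any bigraded analogue of the Eisenbud--Schreyer pairing.
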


The general case is formulated in Theorems \ref{PosconeTheMain}
and \ref{EksisTheMain}.
In the region $R(e_1,e_2)$ there are two distinguished partitions, the maximal one
and the empty one. It turns out that the maximal one corresponds
to the equivariant complex and the empty one corresponds to the
bigraded resolution of a quotient of monomial ideals
constructed in \cite{BS}.

The organisation of the paper is as follows. Section 1 
contains preliminaries. First we give the multigraded Herzog-K\"uhl equations
which give strong restrictions on Betti diagrams of multigraded
artinian modules. We recall the equivariant resolution, and the result
of \cite{Fl} that its twists generate the linear space of multigraded Betti
diagrams of artinian ${\hele}^n$-graded modules of codimension $n$
whose resolution becomes pure when taking total degrees.
This give us a very simple alternative description of  the positive cone $P(e_1,e_2)$.
This is used in Section 2 where we show that the
extremal rays of the positive cone $P(e_1,e_2)$ are generated by the 
Betti diagrams $\beta_\la(\bfa)$ for $\bfa \in \hele^2$, provided
these diagrams really come from resolutions. And that such resolutions
really exist is established in Section 3. 
In Section 4 we briefly discuss the positive cone in the case of three 
variables, providing an example.

\section{Preliminaries}

Let $S = \kr[x_1, \ldots, x_n]$ be the polynomial ring over a field $\kr$.
We shall study $\hele^ n$-graded free resolutions of artinian 
$\hele^ n$-graded $S$-modules
\[ F_0 \vpil F_1 \vpil \cdots \vpil F_n.  \]
 For a multidegree $\bfa = (a_1, a_2, \ldots, a_n)$ in $\hele^ n$
let $|\bfa| = \sum a_i$ be its total degree. We shall be interested
in the case that these resolutions become pure resolutions if we make
them singly graded by taking total degrees. That is there is a
sequence $d_0 < d_1 < \cdots < d_n$ such that 
\[ F_i = \oplus_{|\bfa| = d_i} S(- \bfa)^ {\beta_{i, \bfa}}. \]

\subsection {Betti diagrams and the multigraded Herzog-K\"uhl 
equations}
The {\it multigraded Betti diagram} of such a resolution is the element
\[ \{\beta_{i,\bfa}\}_{\scriptsize \underset{}
{\begin{matrix} i=0, \ldots, n \\ \bfa \in \hele^n \end{matrix}  }}
\in \oplus_{\hele^n} \nat^{n+1}. \] 

\medskip
A way of representing
a multigraded Betti table which will be very convenient for us
is to represent $\beta = \{ \beta_{i, \bfa} \}$
where $i = 0,\ldots,n$ and $\bfa \in \hele^{n}$ by Laurent polynomials
\[ B_i(t) = \sum_{\bfa \in \hele^n} \beta_{i,\bfa} \cdot t^\bfa. \]
We thus get an $(n+1)$-tuple of Laurent polynomials
\[ B = (B_0, B_1, \ldots, B_n). \]
Also the module $\oplus_\bfa S(-\bfa)^{\beta_{i,\bfa}}$ may be conveniently denoted
as $S.B_i$.

\medskip
Let $e_i = d_i - d_{i-1}$, so we get the differences $\bfe  =
(e_1, \ldots, e_n)$. 
Now let $L(\bfe)$ be the linear subspace of $\oplus_{\bfa \in \hele^n} 
{\mathbb Q}^{n+1}$ generated by multigraded Betti diagrams of
$\hele^n$-graded artinian $S$-modules whose resolutions become pure when taking total
degrees, and where the difference sequence of these total degrees is $\bfe$. 
Similarly let $P(\bfe)$ be the positive cone in $\oplus_{\bfa \in \hele^n} 
{\mathbb Q}^{n+1}$ generated by such Betti diagrams.

\medskip
There are some natural restrictions on $L(\bfe)$ coming from the 
multigraded Herzog-K\"uhl equations.
If the resolution resolves the artinian module $M$, 
the multigraded Hilbert series of $M$ is the polynomial
\[ h_M(t) = \frac{\sum_{i,\bfa} (-1)^i \beta_{i, \bfa} \cdot t^ {\bfa}} 
{\Pi_{k=1}^ n (1-t_i)}, \] 
which gives
\begin{equation} \label{SetLigBeta} 
\sum_{i,\bfa} (-1)^ i \beta_{i, \bfa} t^ {\bfa} = h_M(t) \cdot \Pi_{k=1}^n(1-t_i). 
\end{equation}
For each multigrade $\bfa \in \hele^ {n}$ and integer $k = 1, \ldots, n$, let 
the projection $\pi_k(\bfa)$ be $(a_1,\ldots, \hat{a}_k, \ldots, a_n)$, 
the $n-1$-tuple where we omit $a_k$. 

Now we have the multigraded analogs of the Herzog-K\"uhl (HK) equations. 
We obtain these by setting $t_k = 1$ in (\ref{SetLigBeta}) for each $k$. 
This gives for every $\hat{\bfa}$ in $\hele^{n-1}$ and $k = 1, \ldots, n$ 
an equation
\begin{equation} \label{SetLigHK}
\sum_{i,\pi_k(\bfa)= \hat{\bfa}} (-1)^i \beta_{i, \bfa} = 0.
\end{equation}

Let $L^\prime(\bfe)$ be the linear space of elements in $\oplus_{\bfa \in \hele^{n}} 
{\mathbb Q}^{n+1}$ which fulfil the multigraded HK-equations above, 
and which become pure diagrams when taking total degrees 
with the  difference sequence of these total degrees equal to  $\bfe$. 
Also let $P^\prime(\bfe)$ be
the cone in $L^\prime(\bfe)$ consisting of the elements with nonnegative coordinates.
There are natural injections $L(\bfe) \pil L^\prime(\bfe)$ and 
$P(\bfe) \pil P^\prime(\bfe)$. 
In \cite{Fl} the second author showed that the first injection is an isomorphism
and moreover gave an explicit basis for $L(\bfe)$ which we now describe.

\subsection{The equivariant resolution}
In \cite{EFW} the second author together with D.Eisenbud and J.Weyman
constructed $\GL(n)$-equivariant pure resolutions of artinian modules. 
For a partition $\lambda = (\la_1, \ldots,
\la_n)$ let $S_\lambda$ be the associated Schur module, it is an
irreducible representation of $\GL(n)$ (see for instance \cite{FuH}).
The action of the diagonal matrices in $\GL(n)$ gives a decomposition of 
$S_\lambda$ as a $\hele^n$-graded vector space. The basis elements are
given by semi-standard Young tableau of shape $\la$ with entries from 
$1,2, \ldots, n$. All the nonzero graded pieces in this decomposition
 have total degree $|\la| = \sum_{i=1}^n \la_i$. 
The free module $S \te_k S_\la$ then becomes a free multigraded module
where the generators all have total degree $|\la|$.

Now given the difference vector $\bfe$, let 
\[ \la_i = \Sigma_{j = i+1}^n e_j - 1 \] and define a sequence of 
partitions for $i=0, \ldots, n$ by
\begin{equation*}
\alpha(\bfe,i) = (\la_1 + e_1, \la_2 + e_2, \ldots, \la_i + e_i, \la_{i+1}, 
\ldots, \la_n).
\end{equation*}
The construction in  \cite{EFW} then gives a $\GL(n)$-equivariant resolution
\begin{equation} \label{SetLigEe}
E(\bfe) :  S \te_k S_{\alpha(\bfe,0)} \vpil
S \te_k S_{\alpha(\bfe,1)} \vpil \cdots \vpil S \te_k S_{\alpha(\bfe,n)}
\end{equation}
of an artinian $S$-module. 

In the case of two variables $S = \kk[x_1, x_2]$ the resolution takes 
the form
\begin{equation} \label{SetLigEqui2} 
E(e_1,e_2): S \te_k S_{e_2 - 1,0} \vpil S \te_k S_{e_1 + e_2 -1, 0} 
\vpil S \te_k S_{e_1 + e_2 - 1, e_2}. 
\end{equation}


\subsection{The linear space of Betti diagrams of multigraded 
artinian modules}

For a multigraded Betti diagram 
$\beta = \{ \beta_{i, \bfa} \}$ and a multidegree $\bfl$ in $\hele^ {n}$, 
we get the twisted Betti diagram $\beta(-\bfl)$ which in homological degree
$i$ and multidegree $\bfa$ is given by $\beta_{i,\bfa-\bfl}$. If $\cF$
is a resolution with Betti diagram $\beta$, then $\cF(-\bfl)$ is
a resolution with Betti diagram $\beta(-\bfl)$. 

Also let $F_r : S \pil S$ be the map sending $x_i \mapsto x_i^r$. 
Denote by $S^{(r)}$ the ring $S$ with the $S$-module structure
given by $F_r$.  Given any complex $\cF$ we may tensor it with 
$- \te_S S^{(r)}$ and get a complex we denote by $\cF^{(r)}$. 
Note that if $\cF$ is pure with degrees $\bfd$, then $\cF^{(r)}$ is
pure with degrees $r \cdot \bfd$. 

In \cite{Fl} we showed the following. 
\begin{theorem} \label{LinbettiTheMain}
Let $m = \gcd(e_1, \ldots, e_n)$ and let $\bfe = m \cdot \bfe^\prime$. 
The space $L(\bfe)$ is equal to the space $L^\prime(\bfe)$ of diagrams
fulfilling the HK-equations, and the $\beta_{E(\bfe^\prime)^{(m)}}(\bfa)$ 
where $\bfa$ varies over $\hele^n$, form a basis for $L(\bfe)$.

Moreover if $E^\prime$ is another resolution such that the $\beta_{E^\prime}(\bfa)$
form a basis, then
$\beta_{E^\prime}$ is an integer multiple of $\beta_{E(\bfe^\prime)^{(m)}}(\bfa)$
for some $\bfa$.
\end{theorem}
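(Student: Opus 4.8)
The plan is to first recall, or reprove, the two structural facts from \cite{Fl} on which everything rests: that $L(\bfe) = L'(\bfe)$, and that the latter has an explicit combinatorial description. The equality $L(\bfe) = L'(\bfe)$ is the assertion that every diagram satisfying the multigraded HK-equations \eqref{SetLigHK} (and becoming pure with difference vector $\bfe$ when totalized) is actually a $\QQ$-linear combination of Betti diagrams of genuine modules; since the twists $E(\bfe')^{(m)}(\bfa)$ are such Betti diagrams, it suffices to show they span $L'(\bfe)$ and are linearly independent, and that $\dim L'(\bfe)$ equals the number of these twists appearing in any fixed totalized position. So the first step is to set up the bookkeeping: fix total degrees $d_0 < d_1 < \cdots < d_n$, and for a diagram in $L'(\bfe)$ record, in each homological slot $i$, the Laurent polynomial $B_i(\mathbf t)$ supported on multidegrees of total degree $d_i$. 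The HK-equations say precisely that the alternating sum $\sum_i (-1)^i B_i(\mathbf t)$, after substituting $t_k = 1$, vanishes; equivalently $\sum_i (-1)^i B_i$ is divisible by each $(1-t_k)$ in the Laurent ring, hence by their product, which pins down the whole diagram from finitely many free parameters.

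The second step is the dimension count. The free parameters can be taken to be the coefficients of $B_0$ (the generator positions in total degree $d_0$), since once $B_0$ is chosen, the divisibility $\prod_k(1-t_k) \mid \sum_i(-1)^i B_i$ together with the total-degree constraints determines $B_1, \dots, B_n$ by a triangular recursion (this is exactly the multigraded Hilbert-series computation preceding \eqref{SetLigHK}). Thus $\dim L'(\bfe)$ equals the number of monomials $t^{\bfa}$ with $|\bfa| = d_0$ that can occur, which up to a global shift is independent of $d_0$; call this number $N$. On the other side, the twists $E(\bfe')^{(m)}(\bfa)$ whose $0$-th term lands in total degree $d_0$ are indexed by the $\bfa \in \hele^n$ with $|\bfa| = d_0 - m\,d_0(\bfe')$, and there are again exactly $N$ of them, because the construction of \eqref{SetLigEe} places a single generator in each admissible multidegree of the Schur module $S_{\alpha(\bfe,0)}$ and twisting/Frobenius-pulling-back is a bijection on multidegree lattices. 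So the counts match.

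The third step is linear independence of the $\beta_{E(\bfe')^{(m)}}(\bfa)$. Here one uses that the $0$-th term of $E(\bfe')^{(m)}(\bfa)$ is $S(-\bfa)^{\beta_{0,\bfa}} \oplus (\text{terms in strictly larger multidegrees})$ in a suitable partial order refining the coordinatewise one; distinct $\bfa$ give distinct minimal multidegrees in homological degree $0$, so any vanishing linear combination forces all coefficients to vanish, leading-term by leading-term. Combined with the dimension count, the twists form a basis of $L'(\bfe) = L(\bfe)$. Finally, for the uniqueness clause: if $E'$ is another resolution (necessarily pure of type $\bfd$ after totalizing, with difference vector $\bfe$) whose twists $\beta_{E'}(\bfa)$ also form a basis, then in particular $\beta_{E'}$, having its $0$-th term concentrated in a single total degree and being a nonnegative $\QQ$-combination of the $\beta_{E(\bfe')^{(m)}}(\bfa)$, must by minimality of a resolution be supported on a single multidegree in homological degree $0$; matching supports in each homological degree then forces $\beta_{E'} = c \cdot \beta_{E(\bfe')^{(m)}}(\bfa)$ for one $\bfa$ and some $c \in \QQ_{>0}$, and integrality of Betti numbers makes $c$ (a ratio of positive integers with the primitive diagram in the denominator) an integer. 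The main obstacle I expect is the third step: extracting linear independence and the uniqueness statement cleanly requires a genuinely multigraded "leading term" argument — picking the right term order on $\hele^n$ and checking that the equivariant construction of \cite{EFW} really does put exactly one generator in the minimal multidegree — rather than the routine Hilbert-series manipulations of the first two steps.
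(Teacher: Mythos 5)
First, note that this paper does not prove Theorem \ref{LinbettiTheMain} at all: it is quoted from \cite{Fl} ("In \cite{Fl} we showed the following"), so your argument has to stand on its own, and it does not. The fatal step is your second one. You claim the coefficients of $B_0$ are free parameters, i.e.\ that any Laurent polynomial supported in total degree $d_0$ extends, via a "triangular recursion", to an element of $L^\prime(\bfe)$. This is false, and indeed the whole content of the theorem is that it fails: by Corollary \ref{SetCorLinpoly} (a consequence of the theorem), $B_0$ must be a polynomial multiple of the $0$-th Betti polynomial of $E(\bfe^\prime)^{(m)}$ --- for instance, for $n=2$, $m=1$ it must be divisible by $\xi_{e_2}(t,u)$, so $B_0$ equal to a single monomial never occurs in $L^\prime(\bfe)$. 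The HK-equations (\ref{SetLigHK}) say only that $\sum_i(-1)^iB_i$ is divisible by $\prod_k(1-t_k)$; this constrains all the $B_i$ simultaneously and gives no recursion producing $B_1,\dots,B_n$ from an arbitrary $B_0$ (you give no argument, and even injectivity of $\beta\mapsto B_0$ on $L^\prime(\bfe)$ needs a real proof). Consequently the dimension count is empty: both "monomials of total degree $d_0$" and "twists $\bfa$ with $|\bfa|$ fixed" are infinite sets, and matching cardinalities proves nothing; even restricted to a finite window the counts do not match precisely because not every $B_0$ occurs. Also, the assertion that the construction (\ref{SetLigEe}) "places a single generator in each admissible multidegree" of $S_{\alpha(\bfe,0)}$ is wrong for $n\geq 3$, where weight multiplicities of Schur modules exceed one. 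Since your spanning claim rests entirely on independence plus equal dimension, the proof of the hard inclusion $L^\prime(\bfe)\subseteq L(\bfe)$ collapses; only the linear-independence step (a leading-multidegree argument) is sound.

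The "moreover" clause is also not proved by your argument. Membership in the span of the $\beta_{E(\bfe^\prime)^{(m)}}(\bfa)$ gives rational, not nonnegative, coefficients, and "minimality of a resolution" does not force the $0$-th term of $E^\prime$ to sit in a single multidegree --- the equivariant resolution itself has $0$-th term $S\otimes_\kk S_{\alpha(\bfe,0)}$ with generators in many multidegrees. The correct mechanism is the polynomial description underlying Corollary \ref{SetCorLinpoly}: if both families of twists are bases, then the $0$-th Betti polynomials satisfy $B_0^\prime=p\cdot s_0$ and $s_0=q\cdot B_0^\prime$ for homogeneous Laurent polynomials $p,q$, so $pq=1$ and $p$ is a scalar times a monomial $t^{\bfa}$; nonnegativity and integrality of Betti numbers then make that scalar a positive integer. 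As written, your proposal neither establishes $L(\bfe)=L^\prime(\bfe)$ nor the uniqueness statement.
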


This may also be formulated in terms of the associated
$(n+1)$-tuple of Betti polynomials.

\begin{corollary} \label{SetCorLinpoly}
Let $s = (s_0, \ldots, s_n)$ be the $(n+1)$-tuple of Betti
polynomials of $E(\bfe^\prime)^{(m)}$. If $B = (B_0, \ldots, B_n)$ is any
$(n+1)$-tuple of Betti polynomials of an artinian $\hele^n$-graded 
module whose resolution becomes pure when taking total degrees and with difference
vector $\bfe$ of the total degrees, then $B = p \cdot s$ for some homogeneous
Laurent polynomial $p$.
\end{corollary}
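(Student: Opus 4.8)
The plan is to reinterpret Theorem~\ref{LinbettiTheMain} in the language of Betti polynomials and then to verify homogeneity of the resulting factor. Let $M$ be an artinian $\hele^n$-graded $S$-module whose resolution becomes pure with difference vector $\bfe$ of the total degrees, let $\beta=\{\beta_{i,\bfa}\}$ be its multigraded Betti diagram, and let $B=(B_0,\ldots,B_n)$, $B_i(t)=\sum_\bfa \beta_{i,\bfa}\,t^\bfa$, be the associated tuple of Betti polynomials. By definition $\beta$ is one of the generators of $L(\bfe)$, so $\beta\in L(\bfe)$, and since by Theorem~\ref{LinbettiTheMain} the twists $\beta_{E(\bfe^\prime)^{(m)}}(\bfa)$ form a basis of $L(\bfe)$ we may write
\[ \beta=\sum_{\bfa\in\hele^n}c_\bfa\,\beta_{E(\bfe^\prime)^{(m)}}(\bfa), \]
a finite $\rasj$-linear combination.

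The first step is an easy translation. If a complex $\cF$ has tuple of Betti polynomials $(s_0,\ldots,s_n)$, then a twist of $\cF$ by $\bfa\in\hele^n$ has tuple $(t^{\bfa}s_0,\ldots,t^{\bfa}s_n)$; that is, twisting amounts to multiplying by a single Laurent monomial in every homological degree simultaneously (the exponent being $\pm\bfa$ according to the sign convention for twists, which is immaterial here since $\bfa$ runs over all of $\hele^n$). Applying this to the displayed identity and reading off homological degree $i$ gives
\[ B_i(t)=\Bigl(\sum_{\bfa\in\hele^n}c_\bfa\,t^{\bfa}\Bigr)s_i(t)=p(t)\cdot s_i(t),\qquad p(t):=\sum_{\bfa\in\hele^n}c_\bfa\,t^{\bfa}, \]
so $B=p\cdot s$ with $p$ a Laurent polynomial.

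It remains to show that $p$ is homogeneous with respect to total degree, and this no-cancellation point is the only step that needs any care. Write $p=\sum_c p_c$ for the decomposition of $p$ into its total-degree-homogeneous components. Since the resolution of $M$ becomes pure with difference vector $\bfe$, each $B_i$ is concentrated in a single total degree $d_i$ with $d_i-d_{i-1}=e_i$; likewise, since $E(\bfe^\prime)^{(m)}$ is pure with difference vector $m\bfe^\prime=\bfe$, each $s_i$ is a nonzero polynomial concentrated in a single total degree $d_i^\prime$ with $d_i^\prime-d_{i-1}^\prime=e_i$. Then $p_c\,s_i$ is concentrated in total degree $c+d_i^\prime$, and as these total degrees are pairwise distinct for distinct $c$ there is no cancellation among the summands of $\sum_c p_c s_i=B_i$; comparing total degrees and using $s_i\neq0$ forces $p_c=0$ whenever $c\neq d_i-d_i^\prime$. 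Finally $d_i-d_i^\prime=d_0-d_0^\prime$ is independent of $i$, because both $(d_i)$ and $(d_i^\prime)$ have difference vector $\bfe$; hence $p=p_{\,d_0-d_0^\prime}$ is a homogeneous Laurent polynomial and $B=p\cdot s$, as desired.
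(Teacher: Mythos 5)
Your proof is correct and follows exactly the route the paper intends: the paper states Corollary~\ref{SetCorLinpoly} as an immediate reformulation of Theorem~\ref{LinbettiTheMain}, and you simply make that reformulation explicit (twisting a diagram by $\bfa$ corresponds to multiplying every Betti polynomial by the monomial $t^{\bfa}$, and homogeneity of $p$ follows from purity of the total degrees together with the fact that the Laurent polynomial ring is a domain). No gaps; this is just the intended translation carried out carefully.
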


\subsection{The linear space in the case of two variables}
\label{SetSubsecLinto}

Now assume $S = \kk[x_1, x_2]$.
Let $\xi_d(t,u) = t^{d-1} + t^{d-2}u + \cdots + u^{d-1}$ be the cyclotomic
polynomial. The first and last
Betti polynomials of the equivariant resolution (\ref{SetLigEqui2}) are then 
respectively
\[ \xi_{e_2}(t,u), \quad (tu)^{e_2} \xi_{e_1}(t,u) \]
and the middle Betti polynomial is 
\begin{equation} \label{SetLigXi} 
\xi_{e_1 + e_2} =  t^{e_2}\xi_{e_1}(t,u) + u^{e_1}\xi_{e_2}(t,u) = 
u^{e_2}\xi_{e_1}(t,u) + t^{e_1} \xi_{e_2}(t,u).
\end{equation}
By Corollary \ref{SetCorLinpoly} 
the space $L(e_1, e_2)$ may now be described as follows.

\begin{lemma} \label{SetLemB}
 Let $e_1 = \fel\tu$ and $e_2 = \fel\en$ where $m$ is the 
greatest common divisor of $e_1$ and $e_2$. 
A triple of homogeneous Laurent polynomials $B_0, B_1, B_2$ 
whose degrees have $e_1$ and $e_2$ as differences, is in $L(e_1, e_2)$
if and only if the following two equations hold:
\begin{eqnarray} \label{SetLigB02}
B_2(t,u) \cdot \xi_\en(t^\fel, u^\fel) &=& (tu)^{\fel\en} 
B_0(t,u) \cdot 
\xi_{\tu}(t^\fel, u^\fel), \\ \label{SetLigB03}
B_1(t,u) &=& u^{-\en \fel} B_2(t,u) + u^{\tu \fel} B_0(t,u) \\ \notag
                     &=& t^{-\en \fel} B_2(t,u) + t^{\tu \fel} B_0(t,u) .
\end{eqnarray}
\end{lemma}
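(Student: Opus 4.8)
The plan is to derive Lemma~\ref{SetLemB} directly from Corollary~\ref{SetCorLinpoly} by carefully identifying the triple $s = (s_0,s_1,s_2)$ of Betti polynomials of $E(\bfe^\prime)^{(m)}$ and then translating the relation ``$B = p\cdot s$ for some homogeneous Laurent polynomial $p$'' into the two stated equations. First I would write down $s$ explicitly. From Section~\ref{SetSubsecLinto}, the Betti polynomials of the equivariant resolution $E(e_1^\prime,e_2^\prime) = E(q,p)$ are $\xi_p(t,u)$, $\xi_{p+q}(t,u)$, and $(tu)^p\xi_q(t,u)$. Applying the Frobenius-type functor $(-)^{(m)}$, which by the remark preceding Theorem~\ref{LinbettiTheMain} multiplies every generator degree by $m$, replaces each variable $t,u$ by $t^m,u^m$ in these polynomials. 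Hence
\[
s_0 = \xi_p(t^m,u^m), \quad s_1 = \xi_{p+q}(t^m,u^m), \quad s_2 = (tu)^{mp}\,\xi_q(t^m,u^m),
\]
where $mp = e_2$ and $mq = e_1$.

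Next I would prove the ``only if'' direction. Given $B=(B_0,B_1,B_2)$ in $L(e_1,e_2)$, Corollary~\ref{SetCorLinpoly} gives a homogeneous Laurent polynomial $p(t,u)$ with $B_i = p\cdot s_i$. Then
\[
B_2\cdot\xi_p(t^m,u^m) = p\,s_2\,s_0 = p\,(tu)^{mp}\xi_q(t^m,u^m)\,\xi_p(t^m,u^m) = (tu)^{e_2}B_0\cdot\xi_q(t^m,u^m),
\]
which is exactly \eqref{SetLigB02} (recall $mp=e_2$, and in the statement $\en = p$, $\tu = q$, $\fel = m$). For \eqref{SetLigB03}, I would use the identity \eqref{SetLigXi} applied with exponents scaled by $m$: substituting $t\mapsto t^m$, $u\mapsto u^m$, $e_1\mapsto q$, $e_2\mapsto p$ in $\xi_{e_1+e_2} = u^{e_2}\xi_{e_1} + t^{e_1}\xi_{e_2}$ gives
\[
\xi_{p+q}(t^m,u^m) = u^{mp}\xi_q(t^m,u^m) + t^{mq}\xi_p(t^m,u^m),
\]
so multiplying by $p(t,u)$ and rewriting $u^{mp}\xi_q(t^m,u^m) = u^{-mp}\cdot(tu)^{mp}\xi_q(t^m,u^m) = u^{-mp}s_2$ and $t^{mq}\xi_p(t^m,u^m) = t^{mq}s_0$ yields $B_1 = u^{-p m}B_2 + u^{q m}B_0$; the other expression for $B_1$ comes from the second form of \eqref{SetLigXi}. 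Again $pm = e_2$ and $qm = e_1$, matching the exponents $\en\fel$ and $\tu\fel$ in \eqref{SetLigB03}.

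For the ``if'' direction, suppose $B_0,B_1,B_2$ are homogeneous Laurent polynomials with the prescribed degree differences satisfying \eqref{SetLigB02} and \eqref{SetLigB03}. The goal is to produce the Laurent polynomial $p$ with $B_i = p\cdot s_i$; once we have it, $B$ lies in $L(e_1,e_2) = L^\prime(e_1,e_2)$ by Theorem~\ref{LinbettiTheMain}. The natural candidate is $p = B_0/s_0 = B_0/\xi_p(t^m,u^m)$, and the content is to show this quotient is a genuine Laurent polynomial and that it also reproduces $B_1$ and $B_2$. The latter is immediate from \eqref{SetLigB02} and \eqref{SetLigB03} once divisibility is known, so the crux — and the step I expect to be the main obstacle — is the divisibility claim: that $\xi_p(t^m,u^m)$ divides $B_0$ in the Laurent polynomial ring $\kk[t^{\pm 1},u^{\pm 1}]$. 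Since $\kk[t^{\pm1},u^{\pm1}]$ is a UFD and $\xi_p(t^m,u^m)$ is (after clearing the obvious monomial) a product of cyclotomic-type factors, \eqref{SetLigB02} shows $\xi_p(t^m,u^m)$ divides $(tu)^{e_2}B_0\,\xi_q(t^m,u^m)$; one then argues that $\gcd(\xi_p(t^m,u^m),\xi_q(t^m,u^m))$ absorbs only the part of $\xi_p(t^m,u^m)$ corresponding to common roots, and a degree/homogeneity count on $B_0$ (which has exactly $p$ ``slots'' coming from the pure resolution shape $S^{e_2}\leftarrow\cdots$, i.e.\ $e_2 = mp$ generators split into $p$ total degrees) forces the remaining factor to divide $B_0$. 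I would phrase this last count using the fact that $B_0$ is homogeneous of the correct total degree and supported in the right range, reducing it to a one-variable statement by dehomogenizing, where it becomes the elementary fact that $\xi_p(z^m)\mid f(z)g(z)$ with $\deg$-constraints on $f$ forces $\xi_p(z^m)\mid f(z)$ up to the shared cyclotomic factors with $\xi_q(z^m)$.
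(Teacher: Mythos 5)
Your overall route is the same as the paper's: identify the Betti polynomials of $E(q,p)^{(m)}$ as $s=(\xi_p(t^m,u^m),\,\xi_{p+q}(t^m,u^m),\,(tu)^{mp}\xi_q(t^m,u^m))$, get the ``only if'' half by writing $B=f\cdot s$ via Corollary \ref{SetCorLinpoly} and using (\ref{SetLigXi}), and for the ``if'' half recover $f$ and conclude membership in $L(e_1,e_2)$ from Theorem \ref{LinbettiTheMain}. The paper's converse is a one-line ``we may deduce that the equation above holds,'' and your reduction of it to the divisibility $\xi_p(t^m,u^m)\mid B_0$ is exactly the right skeleton.

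The one genuine problem is your justification of that divisibility. You propose to handle ``the shared cyclotomic factors of $\xi_p(t^m,u^m)$ and $\xi_q(t^m,u^m)$'' by a degree/homogeneity count on $B_0$; but a degree bound can never force divisibility, so as written that step does not close. In fact no such count is needed, because there are no shared factors: since $\gcd(p,q)=1$, a common root $t/u=\zeta$ of $\xi_p(t^m,u^m)=(t^{mp}-u^{mp})/(t^m-u^m)$ and $\xi_q(t^m,u^m)=(t^{mq}-u^{mq})/(t^m-u^m)$ would be both an $mp$-th and an $mq$-th root of unity, hence an $m$-th root of unity -- and those roots have been divided out; equivalently, $\gcd(t^{mp}-u^{mp},t^{mq}-u^{mq})=t^m-u^m$. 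Thus $\xi_p(t^m,u^m)$ and $\xi_q(t^m,u^m)$ are coprime in the UFD $\kk[t^{\pm1},u^{\pm1}]$, and $\xi_p(t^m,u^m)$ is not divisible by $t$ or $u$, so (\ref{SetLigB02}) immediately gives $\xi_p(t^m,u^m)\mid B_0$; then $B_2=f\,(tu)^{mp}\xi_q(t^m,u^m)$ by cancellation and $B_1=f\,\xi_{p+q}(t^m,u^m)$ by (\ref{SetLigB03}) and (\ref{SetLigXi}), as you say. (Alternatively, the converse can be had even more cheaply: setting $u=1$ and $t=1$ in the two expressions of (\ref{SetLigB03}) yields exactly the HK-equations (\ref{SetLigHK}), so the triple lies in $L^\prime(e_1,e_2)=L(e_1,e_2)$ by Theorem \ref{LinbettiTheMain}.) With the coprimality observation replacing the degree count, your proof is complete and agrees with the paper's.
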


\begin{proof}
By Corollary \ref{SetCorLinpoly} we have
\begin{equation} \notag (B_0, B_1, B_2) = f(t,u) \cdot 
(\xi_\tu(t^m, u^m), \xi_{\en + \tu}(t^m, u^m), (tu)^{\en + \tu}
\xi_{\tu}(t^m, u^m)).
\end{equation}
This gives (\ref{SetLigB02}). Also (\ref{SetLigB03}) follows by
(\ref{SetLigXi}).
Conversely, if (\ref{SetLigB02}) and (\ref{SetLigB03}) hold, we may deduce
that the equation above holds, so $(B_0,B_1,B_2)$ is in $L(e_1, e_2)$.
\end{proof}
 
For a homogeneous Laurent polynomial $f(t,u)$ denote by $f^{dh}(t)$ its
dehomogenisation with respect to $u$. If we now dehomogenise equation
(\ref{SetLigB02}) we get an equation
\[ B_2^{dh}/t^{\en \fel} \cdot \xi_\en(t^\fel) = B_0^{dh} \cdot \xi_\tu(t^\fel). \]
Each of the first factors are uniquely determined by the other, and if the triple
comes from an actual complex, the coefficients are non-negative.

With some abuse of notation we also identify the cone 
$P^\prime = P^\prime(e_1, e_2)$ with the 
positive cone of pairs of Laurent polynomials
$(A(t),B(t))$ in one variable $t$ and with non-negative coefficients, such that 
\[ B(t) \xi_\en(t^\fel) = A(t) \xi_\tu(t^\fel). \]
We shall in the next section describe the cone $P^\prime$ completely. 
Recall that we have an injective map $P(e_1,e_2) \pil P^\prime(e_1,e_2)$. 
In Section \ref{EksisSec} we show that this map is an isomorphism.

\section{The positive cone of bigraded Betti diagrams}
\label{PosSec}
In this section we describe completely the positive cone $P^\prime(e_1,e_2)$
of diagrams fulfilling the HK-equations (\ref{SetLigHK}).
We shall show that there is a finite number
of diagrams $\beta_\lambda$ parametrised by certain partitions $\la$ 
such that extremal rays in the positive cone
are the one-dimensional rays generated by $\beta_\la(\bfa)$
for $\bfa \in \hele$.

\medskip
{\noindent \it Note.} In the following we let $e_1 = \fel\tu$ and 
$e_2 = \fel\en$ where $\fel$ is the greatest common divisor of $e_1$ and $e_2$.

\subsection{Partitions}

Let ${\mathbb N}^2$ have the partial ordering where $(a_1,a_1) \leq (b_1,b_2)$
if $a_1 \leq b_1$ and $a_2 \leq b_2$.
An order ideal $T$ in ${\mathbb N}^2$ (a set closed under taking smaller elements)
gives rise to two partitions. The first is given by
\[ \lambda_j = 1 + \max \{ i \,| \, (i,j) \in T \}, \,\, j \geq 0. \]
The second is the dual partition 
\[ \mu_i = 1 + \max \{ j \, | \, (i,j) \in T \}, \,\, i \geq 0. \]
(If for a given $j$ no $(i,j)$ is in $T$, we set $\lambda_j =0$ and
correspondingly for $\mu_i$.)
Note that $\lambda$ and $\mu$ are dual partitions. So
$\mu_i$ is the cardinality of $\{ j\,| \, \lambda_j > i \}$.

\medskip
We shall be interested in order ideals $T$ which are contained in 
the region $R(\en,\tu)$ in the first quadrant bounded by the following strict inequality
\begin{equation} \notag
\en x+\tu y < (\en-1)(\tu-1). 
\end{equation}

\begin{lemma}
Let the order ideal  $T$ correspond to the partition $\lambda$.
Then $T$ is contained in the region above if and only if 
every $a\tu-\en \lambda_{\en-1-a}$ is nonnegative for $0 \leq a < \en$. 
Correspondingly for the dual partition $\mu$.
\end{lemma}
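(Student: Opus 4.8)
The plan is to unwind the two competing descriptions of when $T\subseteq R(p,q)$ — the defining linear inequality $px+qy<(p-1)(q-1)$ on lattice points, and the arithmetic conditions on the partition $\lambda$ — and show they coincide point by point along the "staircase" of $\lambda$. The key observation is that an order ideal $T$ lies in a downward-closed region bounded by a line of negative slope precisely when all of its \emph{maximal} (outer corner) lattice points do; and these maximal points are exactly recorded by the columns of $\lambda$. Concretely, for each $a$ with $0\le a<p$, the tallest lattice point of $T$ in the vertical strip $x=a$ is $(a,\lambda_a-1)$ when $\lambda_a>0$ (and the strip contributes nothing when $\lambda_a=0$). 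So I would first reduce the statement "$T\subseteq R(p,q)$" to the finite conjunction, over $0\le a<p$, of the inequalities $p a+q(\lambda_a-1)<(p-1)(q-1)$, taking care of the degenerate case $\lambda_a=0$ separately (then the strip $x=a$ meets $T$ in no points, so there is nothing to check, and one should confirm the claimed arithmetic inequality is also vacuously satisfied or automatically true there).

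Next I would do the elementary algebra converting $p a+q(\lambda_a-1)<(p-1)(q-1)$ into the asserted form. Expanding, $(p-1)(q-1)=pq-p-q+1$, so the inequality reads $p a+q\lambda_a-q<pq-p-q+1$, i.e. $p a+q\lambda_a<pq-p+1$, i.e. $p(a+1)+q\lambda_a\le pq$ (using integrality to replace $<pq-p+1$ by $\le pq$; note $pq-p+1$ and $pq$ differ by $p-1\ge 0$ and the left side is an integer, so this step needs a short justification, in particular that no integer lies strictly between). Rewriting with $a'=p-1-a$, so $a=p-1-a'$ and $a+1=p-a'$, this becomes $p(p-a')+q\lambda_{p-1-a'}\le pq$, i.e. $pq-pa'+q\lambda_{p-1-a'}\le pq$, i.e. $q\lambda_{p-1-a'}\le p a'$, i.e. $a'q-p\lambda_{p-1-a'}\ge 0$. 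Letting $a'$ range over $0\le a'<p$ as $a$ does, this is exactly the stated condition that every $aq-p\lambda_{p-1-a}$ be nonnegative. The statement about the dual partition $\mu$ follows by the symmetry $p\leftrightarrow q$, $x\leftrightarrow y$, $\lambda\leftrightarrow\mu$, since $R(p,q)$ and the notion of order ideal are both symmetric under transposition — I would just remark this rather than repeat the computation.

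The one genuine subtlety — and the step I expect to need the most care — is the passage between the \emph{strict} inequality defining $R(p,q)$ and the arithmetic reformulation, together with making sure the reduction to outer corners is airtight. For the reduction: since $p,q>0$, the half-plane $px+qy<(p-1)(q-1)$ is downward- and leftward-closed among lattice points, so $T\subseteq R(p,q)$ iff every maximal element of $T$ lies in $R(p,q)$; and one must verify that the maximal lattice points of the order ideal $T$ attached to $\lambda$ are precisely the points $(a,\lambda_a-1)$ for those $a$ with $\lambda_a>0$ and $\lambda_a>\lambda_{a+1}$ — though in fact checking \emph{all} points $(a,\lambda_a-1)$ with $\lambda_a>0$ is harmless and cleaner, since $(a,\lambda_a-1)\in T$ always and these points dominate everything in their strip. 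For the strictness: one should observe that when $\gcd(p,q)=1$ (or more generally) the line $px+qy=(p-1)(q-1)$ may or may not pass through lattice points, but the argument above sidesteps this because after clearing to $q\lambda_{p-1-a}\le p a$ we are comparing two integers, and I should state explicitly why "$<(p-1)(q-1)$" for the integer $pa+q(\lambda_a-1)$ is equivalent to "$\le pq-p$" for it, namely that the integers strictly below $pq-p+1$ are exactly those $\le pq-p$. With that bookkeeping pinned down the proof is just the expansion above.
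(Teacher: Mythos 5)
Your overall strategy is the right one and is the same as the paper's: reduce ``$T\subseteq R(\en,\tu)$'' to checking the outer corner of $T$ in each strip (using that the half-plane is downward closed), treat the empty strips separately, and convert the strict inequality into a non-strict inequality between integers. However, the execution contains a genuine error coming from a misreading of the convention for $\lambda$. In the paper, $\lambda_j = 1+\max\{\,i \mid (i,j)\in T\,\}$, so $\lambda_j-1$ is the largest \emph{first} coordinate among points of $T$ whose \emph{second} coordinate is $j$; the extremal points recorded by $\lambda$ are $(\lambda_j-1,\,j)$, and substituting into $\en x+\tu y$ pairs $\lambda$ with the coefficient $\en$. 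Your claim that ``the tallest lattice point of $T$ in the vertical strip $x=a$ is $(a,\lambda_a-1)$'' describes the dual partition $\mu$, not $\lambda$, and under that reading the index should run over $0\le a<\tu$ (columns meeting $R(\en,\tu)$ have $x\le \tu-2$), not $0\le a<\en$.

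This mismatch is hidden by an algebraic slip: from $p(a+1)+q\lambda_a\le pq$ with $a=p-1-a'$ you write $p(p-a')+q\lambda_{p-1-a'}\le pq$, ``i.e.\ $pq-pa'+q\lambda_{p-1-a'}\le pq$'' --- but $p(p-a')=p^2-pa'$, not $pq-pa'$. Done correctly, your setup gives $q\lambda_{p-1-a'}\le p(q-p+a')$, which is not the asserted condition (and exposes the transposition), while the same computation with the correct substitution $a'=q-1-a$ over $0\le a<q$ yields $a'p-q\lambda_{q-1-a'}\ge 0$, i.e.\ the statement for the dual partition. The fix is simply to run your argument on horizontal strips for $\lambda$: for $j=\en-1-a$ the extremal point is $(\lambda_j-1,\,j)$, and $\en(\lambda_j-1)+\tu j\le \en\tu-\en-\tu$ is equivalent to $a\tu-\en\lambda_{\en-1-a}\ge 0$ (with $\lambda_{\en-1-a}=0$ handled trivially, and the case $a=0$ forcing $\lambda_{\en-1}=0$, which disposes of all higher rows). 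With that correction your proof coincides with the paper's; your column computation then gives the ``correspondingly for $\mu$'' half rather than following it by symmetry.
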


\begin{proof}
First note that $a\tu - \en \lambda_{\en-1-a} \geq 0$ if and only if
\[(\en-1-a)\tu + (\lambda_{\en-1-a}-1)\en \leq \en \tu-\en-\tu. \]
Assume $0 \leq a < p$. If $T$ is contained in $R(p,q)$ then if 
$\lambda_{p-1-a} \geq 1$ it fulfils the second equation above and therefore the first.
If $\lambda_{p-1-1} = 0$ the first equation is also fulfilled.
Suppose now that $T$ fulfils the first equation. Then when $\lambda_{p-1-a} \geq1$
the point $(p-1-a, \lambda_{p-1-a} - 1)$ is in $R(p,q)$, so $T$ is contained 
in $R(p,q)$. 
\end{proof}

The following easy lemma will be useful.

\begin{lemma}\label{PosLemHopp} Let $P(t) = \sum c_i t^i$ be a polynomial with 
positive coefficients. Write $P(t) \xi_d(t) =  \sum_{j \in \hele} \alpha_jt^j$.
Then $\alpha_j - \alpha_{j-1} = c_j - c_{j-d}$.
\end{lemma}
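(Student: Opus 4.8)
The plan is to expand the product $P(t)\xi_d(t)$ directly and read off its coefficients. Recall from the dehomogenisation that $\xi_d(t) = 1 + t + \cdots + t^{d-1} = \sum_{k=0}^{d-1} t^k$. Adopting the convention that $c_i = 0$ for indices $i$ outside the (finite) support of $P$, and similarly $\alpha_j = 0$ outside the support of $P\xi_d$, we compute
\[
P(t)\,\xi_d(t) = \sum_{i}\sum_{k=0}^{d-1} c_i\, t^{i+k} = \sum_{j\in\hele}\Bigl(\sum_{k=0}^{d-1} c_{j-k}\Bigr) t^j,
\]
so that $\alpha_j = c_j + c_{j-1} + \cdots + c_{j-d+1}$ for every $j \in \hele$. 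Writing the analogous expression $\alpha_{j-1} = c_{j-1} + c_{j-2} + \cdots + c_{j-d}$ and subtracting, all intermediate terms cancel (a telescoping) and what survives is exactly $\alpha_j - \alpha_{j-1} = c_j - c_{j-d}$, which is the assertion.

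An equivalent and even shorter route is to clear the denominator: since $(t-1)\xi_d(t) = t^d - 1$, the equality $P(t)\xi_d(t) = \sum_j \alpha_j t^j$ yields $(t^d - 1)P(t) = (t-1)\sum_j \alpha_j t^j$, and comparing the coefficient of $t^j$ on both sides gives $c_{j-d} - c_j = \alpha_{j-1} - \alpha_j$. I would probably present just the first version, as it also records the useful closed form $\alpha_j = c_j + c_{j-1} + \cdots + c_{j-d+1}$.

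There is no real obstacle here: the statement is a one-line coefficient identity. The only thing to be careful about is the index bookkeeping — one must fix once and for all the convention that $c_i$ and $\alpha_j$ are defined for all integer indices with the value $0$ off the support, so that the telescoping identity is valid for every $j \in \hele$, including the boundary indices where $P$ or $P\xi_d$ vanishes. Note also that the hypothesis of positive coefficients is not used in the identity itself; it is merely the situation in which the lemma will later be applied.
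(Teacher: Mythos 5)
Your proof is correct and is essentially the paper's own argument: the paper simply notes that $\alpha_j = \sum_{i=j-d+1}^{j} c_i$ and lets the difference telescope, which is exactly your main computation (your convention of extending $c_i$ and $\alpha_j$ by zero off the support, and your remark that positivity is not needed, are sensible but add nothing new).
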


\begin{proof} This is clear from $\alpha_j = \sum_{i = j-d+1}^{j} c_i$.
\end{proof}

The following result will essentially describe the extremal rays.

\begin{proposition} Suppose $\en$ and $\tu$ are relatively prime and
let $T$ be an order ideal in $R(\en,\tu)$.
Write  
\[ A_T(t) = \sum_{a=0}^{\en-1} t^{a\tu-\en\lambda_{\en-1-a}},
\quad B_T(t) = \sum_{a=0}^{\tu-1} t^{a\en-\tu\mu_{\tu-1-a}}. \]
Then 
\[ A_T(t) \xi_\tu(t) = B_T(t) \xi_\en(t). \]
\end{proposition}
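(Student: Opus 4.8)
The plan is to compare the two sides of the claimed identity coefficient by coefficient, using Lemma~\ref{PosLemHopp} to reduce to a combinatorial statement about the exponents appearing in $A_T$ and $B_T$. The crucial structural observation is that, since $\en$ and $\tu$ are coprime, the exponents $a\tu - \en\lambda_{\en-1-a}$ for $0 \le a < \en$ are exactly those integers in a suitable residue class structure: each one is $\equiv a\tu \pmod{\en}$, so as $a$ runs over $0,\dots,\en-1$ these exponents occupy $\en$ distinct residue classes modulo $\en$ — in fact \emph{all} of them, because $\tu$ is invertible mod $\en$. Dually, the exponents $b\en - \tu\mu_{\tu-1-b}$ occupy all $\tu$ residue classes modulo $\tu$. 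So $A_T(t)$ is a sum of $\en$ monomials, one in each class mod $\en$, and $B_T(t)$ is a sum of $\tu$ monomials, one in each class mod $\tu$.

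First I would set $P(t) = A_T(t)\xi_\tu(t)$ and $Q(t) = B_T(t)\xi_\en(t)$ and write $P(t) = \sum \alpha_j t^j$, $Q(t) = \sum \gamma_j t^j$. By Lemma~\ref{PosLemHopp} applied with $d = \tu$ and $c_i$ the coefficient of $t^i$ in $A_T$, we get $\alpha_j - \alpha_{j-1} = [t^j]A_T - [t^{j-\tu}]A_T$; since the exponents of $A_T$ all lie in distinct classes mod $\en$ and each consecutive difference only ever involves the single class $j \bmod \en$, telescoping along the arithmetic progression $\{j, j-\tu, j-2\tu, \dots\}$ shows that $\alpha_j$ is determined by how many exponents of $A_T$ lying in the class $j\bmod \en$ are $\le j$ minus how many are $> j$ in a shifted sense — more cleanly, $\alpha_j$ counts the number of pairs $(a,k)$ with $0\le a<\en$, $0 \le k < \tu$, and $a\tu - \en\lambda_{\en-1-a} + k\tu \cdot(\text{something}) $... rather than push this through I would instead argue directly: $\alpha_j = \#\{\, 0 \le a < \en,\ 0 \le r < \tu : a\tu - \en\lambda_{\en-1-a} + r = j \,\}$ since each monomial of $A_T$ contributes a block of $\tu$ consecutive exponents. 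The same for $\gamma_j$ with roles of $\en,\tu$ swapped. So the identity to prove is that for every $j$,
\[
\#\{(a,r) : 0\le a<\en,\ 0 \le r<\tu,\ a\tu - \en\lambda_{\en-1-a} + r = j\}
= \#\{(b,s) : 0\le b<\tu,\ 0\le s<\en,\ b\en - \tu\mu_{\tu-1-b} + s = j\}.
\]

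The heart of the matter — and the step I expect to be the main obstacle — is to see that both sides count \emph{the same lattice set} under the bijection $\NN^2 \to \ZZ$, $(i,j')\mapsto \en j' + \tu i$ (or a variant thereof), transported through the complementary description of the order ideal $T$ by its partition $\lambda$ versus its dual $\mu$. Concretely, I would show that the exponent $a\tu - \en\lambda_{\en-1-a} + r$ ranges, as $r$ ranges over $\{0,\dots,\tu-1\}$, over precisely the values $\en\tu - \en - \tu - (\en(\lambda_{\en-1-a}-1) + \tu(\en-1-a)) + (\text{shift})$ — i.e. over the images under the coprime parametrisation of the ``column'' of the complement of $T$ above position $\en-1-a$. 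Summing over $a$, the left side counts all lattice points of the complement of $T$ in the triangle $R(\en,\tu)$, each weighted by its image under $(i,j')\mapsto \en\tu-\en-\tu-\en j'-\tu i$; and since $\lambda$ and $\mu$ are dual partitions describing the \emph{same} order ideal $T$ (as noted just before the region is introduced), the right side counts exactly the same multiset of integers, just organised by rows instead of columns. The coprimality of $\en,\tu$ is what makes the map $(i,j')\mapsto \en j'+\tu i$ injective on the relevant range, so ``same multiset of integers'' is genuinely equivalent to ``same lattice set''. Once this identification of the two counting problems is in place, the polynomial identity follows immediately; the remaining work is just the bookkeeping to align the shifts, which is routine.
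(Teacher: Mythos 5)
Your first reduction is fine: since $\en$ and $\tu$ are coprime the exponents of $A_T$ (resp.\ $B_T$) lie in distinct residue classes, so expanding the products gives that the claimed identity is equivalent to the statement that for every $j$ the number of $a\in[0,\en)$ with $a\tu-\en\lambda_{\en-1-a}\in(j-\tu,\,j]$ equals the number of $b\in[0,\tu)$ with $b\en-\tu\mu_{\tu-1-b}\in(j-\en,\,j]$. The gap is in the step you yourself flag as the heart of the matter: this equality is \emph{not} an instance of "both sides count the same lattice set, organised by rows versus columns,'' and the duality of $\lambda$ and $\mu$ alone does not deliver it. Writing $N=\en\tu-\en-\tu$ and $\phi(x,y)=\en x+\tu y$, the left-hand count is the number of rows $y\in[0,\en)$ whose last point of $T$ (or the virtual point $(-1,y)$ when the row is empty) has $\phi$-value in the window $[N-j,\,N-j+\tu)$, while the right-hand count is the number of columns $x\in[0,\tu)$ whose top point has $\phi$-value in the window $[N-j,\,N-j+\en)$. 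These are windowed counts of two genuinely different multisets of integers (there are $\en$ row-ends and $\tu$ column-ends, so they cannot coincide as multisets once $\en\neq\tu$), taken with windows of different lengths; for instance with $\en=2$, $\tu=3$ and $T=\{(0,0)\}$ the row-end values are $\{0,1\}$ and the column-end values are $\{-1,0,1\}$. Relatedly, your intermediate claim that the block of $\tu$ consecutive exponents attached to a fixed $a$ is the image of a "column of the complement of $T$'' cannot be right even dimensionally: such a column is infinite and its $\phi$-image is an arithmetic progression of step $\en$, not a run of $\tu$ consecutive integers. So the "routine bookkeeping to align the shifts'' is exactly the unproved content of the proposition.

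For comparison, the paper closes this gap not by a direct bijection of counts but by tracking where the coefficient sequence of each product jumps: using Lemma~\ref{PosLemHopp} one shows that the coefficients of $A_T(t)\xi_\tu(t)$ increase (by $1$) exactly at $j=0$ and at the values of $j$ determined by the maximal elements $(u,v)$ of $T$ via the weight $\en\tu-\en-\tu-\phi(u,v)$, and decrease exactly at the values determined by the minimal elements of the complement of $T$. Since this description is symmetric under transposing $T$ (interchanging $\lambda\leftrightarrow\mu$ and $\en\leftrightarrow\tu$), the product $B_T(t)\xi_\en(t)$ has the same increments and decrements, hence the two products are equal. If you want to salvage your approach, you would need to prove the windowed counting identity above directly, which amounts to the same analysis of where rows end versus where columns end; at present your proposal asserts it rather than proves it.
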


\begin{proof}
Note that since $\en$ and $\tu$ are relatively prime, 
the coefficient of each power $t^j$
in $A_T$ or $B_T$ is $0$ or $1$.  
Writing $\sum \alpha_j t^j$ for the product $A_T(t) \xi_\tu(t)$ we see that 
when $\alpha_j > \alpha_{j-1}$ we have $\alpha_j = \alpha_{j-1} + 1$.
We shall show that the indices $j$ for which this happens are exactly when $j = 0$ or
$j = \en\tu-\en-\tu-\tu u-\en v$ where $(u,v)$ is a maximal element in 
the poset $T$,
i.e. $(u,v)$ is in $T$, but neither $(u+1,v)$ nor $(u,v+1)$ is in $T$.
Since the analog holds for the product $B_T(t) \xi_\en(t)$, these products
must increase exactly at the same indices. An analog argument also show
that they decrease at exactly the same indices, namely $\alpha_j < \alpha_{j-1}$ 
iff $j = \en\tu-\tu u -\en v$ where $(u,v)$ is not in $T$ but
$(u-1,v)$ and $(u,v-1)$ are either in $T$ or have $-1$ as a coordinate. 
Hence the products are equal.

Now $\alpha_j > \alpha_{j-1}$ when $j = a\tu- \en\lambda_{\en-1-a}$ 
for some $a$
but $(a-1)\tu - \en \lambda_{\en-1-a}$ is not a power in $A(t)$. 
Thus either $a = 0$ or 
$\lambda_{\en-a}
< \lambda_{\en-1-a}$. But this means that $j = 0$ or  
$(u,v) = (\lambda_{\en-1-a} - 1,\en-1-a)$
is a maximal element in $T$.
We easily compute that
\[ j = a\tu-\en \lambda_{\en-1-a}  = \en\tu-\en-\tu-\tu u-\en v.\]
\end{proof}

\begin{remark} \label{PosRemMin}
The empty poset $T = \emptyset$ corresponds to the polynomials
\begin{eqnarray} \notag A_{\emptyset}(t) & = & \xi_\en(t^\tu) \\
 \notag  B_{\emptyset}(t) & = & \xi_\tu(t^\en).
\end{eqnarray}
Via the correspondence at the end of Subsection \ref{SetSubsecLinto}
these corresponds to the 
Betti diagram of a resolution of an artin
module. This is the module described in \cite[Remark 3.2]{BS} 
which is the quotient $I/J$ of 
two monomial ideals in $k[x,y]$: the ideal $I = (x^{(\en-1)\tu}, x^{(\en-2)\tu}y^\tu, 
\ldots, y^{(\en-1)\tu})$
and the ideal $J = (x^{\en\tu}, x^{\en(\tu-1)}y^\en, 
\ldots, y^{\en\tu})$
\end{remark}

\begin{remark}\label{PosRemMax}
There is also a maximal order ideal $\hat T$ in the region $R(\en,\tu)$ and this corresponds
to the polynomials
\begin{eqnarray} 
\notag   A_{\hat T}(t) & = & \xi_\en(t) \\
\notag   B_{\hat T}(t) & = & \xi_\tu(t)
\end{eqnarray} 
which again via the correspondence at the end of Subsection \ref{SetSubsecLinto}
corresponds to the Betti diagrams of the $GL(2)$-equivariant resolutions
$E(p,q)$ constructed in \cite{EFW}.
\end{remark}

\subsection{Decomposing}
Now any polynomial $A(t)$ may be written
\[ A(t) = \sum_{a=0}^{\en-1} \sum_{b \in \hele} \alpha_{a,b} t^{a\tu-b\en}. \]
For each $a$ let $\lambda_{\en-1-a}$ be the maximum of the set 
$\{ b \, | \, \alpha_{a,b} \neq 0 \}$. We may then write  
\[ A(t) = A_{\min}(t) + A_+(t) \]
where 
\[ A_{\min}(t) = \sum_{a=0}^{\en-1} 
\alpha_{a,\lambda_{\en-1-a}} t^{a\tu-\lambda_{\en-1-a}\en}. \]
Correspondingly 
we may write
\[ B(t) = \sum_{a=0}^{\tu-1} \sum_{b \in \hele} \beta_{a,b} t^{a\en-b\tu}. \]
For each $a$ let $\mu_{\tu-1-a}$ be the maximum of the set 
$\{ b \, | \, \beta_{a,b} \neq 0 \}$. We may then write  
\[ B(t) = B_{\min}(t) + B_+(t) \]
where 
\[ B_{\min}(t) = \sum_{a=0}^{\tu-1} \beta_{a,\mu_{\tu-1-a}} t^{a\en-\mu_{\tu-1-a}\en}. \]

\begin{proposition}\label{PosconePropABmin} Let $\en$ and $\tu$ be relatively prime.
Assume $A(t)$ and $B(t)$ are polynomials with nonnegative coefficients and
nonzero constant terms. Suppose
\[ A(t) \xi_\tu(t) = B(t) \xi_\en(t). \]
Let $\lambda$ and $\mu$ be the sequences corresponding to $A_{\min}(t)$ and
$B_{\min}(t)$. Then these sequences are partitions which are dual.
\end{proposition}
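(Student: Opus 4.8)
The plan is to analyze the two expansions
\[
A(t) = \sum_{a=0}^{\en-1} \sum_{b} \alpha_{a,b} t^{a\tu - b\en}, \qquad
B(t) = \sum_{a=0}^{\tu-1} \sum_{b} \beta_{a,b} t^{a\en - b\tu},
\]
together with the hypothesis $A(t)\xi_\tu(t) = B(t)\xi_\en(t)$, and to extract from it the defining inequalities of a partition, namely that $\lambda_{\en-1-a}$ is (weakly) decreasing in $a$ descending, i.e.\ $\lambda_0 \geq \lambda_1 \geq \cdots \geq \lambda_{\en-1}$, and similarly for $\mu$; the duality of $\lambda$ and $\mu$ would then follow from identifying the "staircase" of highest exponents on the two sides. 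First I would note that, since $\en$ and $\tu$ are coprime, the residue $a\tu \bmod \en$ is a bijection on $\{0,\dots,\en-1\}$, so for each residue class modulo $\en$ there is exactly one value of $a$ contributing monomials $t^{a\tu - b\en}$ to $A$; thus $A_{\min}(t)$ records, residue class by residue class, the \emph{largest} exponent appearing — equivalently the smallest value of $b$ is not what we want, rather $\lambda_{\en-1-a}$ is the largest $b$, so $a\tu - \lambda_{\en-1-a}\en$ is the \emph{smallest} exponent in that class. The key structural fact is that $A_{\min}(t)$ is exactly the "lower boundary" of the Newton polytope data of $A$ in each residue class, and similarly for $B$.

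The main step is to show that the product $A(t)\xi_\tu(t)$ is governed near its "bottom" by $A_{\min}$ alone, and likewise $B(t)\xi_\en(t)$ by $B_{\min}$. Concretely, I would order the exponents and argue that the lowest-degree behaviour of the product, refined residue-class by residue-class, forces $A_{\min}(t)\xi_\tu(t)$ and $B_{\min}(t)\xi_\en(t)$ to agree — one cannot have cancellation creating the minimal exponents, because the extra terms $A_+$ have strictly larger exponents in each class. Once $A_{\min}(t)\xi_\tu(t) = B_{\min}(t)\xi_\en(t)$ is established (or at least that the two sides share the same minimal monomials in each class), I would invoke the increase/decrease analysis already used in the proof of the earlier Proposition: the product $A_{\min}(t)\xi_\tu(t)$ strictly increases in degree precisely when some $\lambda_{\en-a} < \lambda_{\en-1-a}$, and by Lemma \ref{PosLemHopp} the pattern of increases is controlled by the coefficient differences $c_j - c_{j-\tu}$. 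Matching these patterns on the two sides forces both $\lambda$ and $\mu$ to be weakly decreasing — hence genuine partitions — and forces the shape of $\lambda$ to be the transpose of the shape of $\mu$, i.e.\ they are dual.

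The hard part, I expect, will be the "localization" step: rigorously showing that $A_{\min}$ and $B_{\min}$ alone determine the relevant (minimal) part of the product, ruling out the possibility that terms from $A_+$ or $B_+$ conspire to alter the bottom staircase. The cleanest route is probably to fix a residue class $r$ modulo $\en$, restrict both sides of $A(t)\xi_\tu(t)=B(t)\xi_\en(t)$ to the sub-Laurent-series supported on exponents $\equiv r \pmod{\en}$ (this is legitimate since $\xi_\tu(t)$ has all exponents in an interval and $\xi_\en(t) = \sum_{i<\en}t^i$ distributes one term into each class), and then compare lowest-order terms. An alternative, slicker argument: substitute and use that $\xi_\tu(t)(t-1) = t^\tu - 1$ to reduce the functional equation to $A(t)(t^\tu-1)\xi_\en(t) = B(t)(t^\en-1)\xi_\tu(t)$ and exploit the coprimality to read off divisibility constraints; but I think the residue-class/minimal-exponent bookkeeping is the more transparent path, so that is the one I would carry out.
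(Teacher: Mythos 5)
Your reduction hinges on the claim that $A_{\min}(t)\,\xi_q(t) = B_{\min}(t)\,\xi_p(t)$ (or at least that the bottom of the common product is governed by $A_{\min}$, $B_{\min}$ alone), and that claim is false. Take $p=2$, $q=3$ and $A(t)=1+t+t^2+t^3$, $B(t)=1+t+2t^2+t^3+t^4$; this pair satisfies $A\xi_3=B\xi_2$ (it is the sum of the two extremal pairs $(1+t^3,\,1+t^2+t^4)$ and $t\cdot(1+t,\,1+t+t^2)$). Here $A_{\min}=1+t$ and $B_{\min}=1+t+2t^2$, but $A_{\min}\xi_3 = 1+2t+2t^2+t^3$ while $B_{\min}\xi_2 = 1+2t+3t^2+2t^3$. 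The obstruction is not cancellation (all coefficients are nonnegative, so there is none): it is that $A_+$ and $B_+$ contribute positively to exactly the same low-degree coefficients of the product, so neither the coefficients nor the increase/decrease pattern of the product is determined by $A_{\min}$ and $B_{\min}$ in isolation. Moreover the coefficients of $A_{\min}$ need not be $0$ or $1$, and your plan to then ``invoke the increase/decrease analysis already used in the proof of the earlier Proposition'' is circular at the decisive point: that analysis is carried out for the polynomials $A_T$, $B_T$ attached to an order ideal $T\subseteq R(p,q)$, i.e.\ it already assumes that $\lambda$ is a partition, which is exactly what has to be proved here.

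What is missing is the transfer argument on the full polynomials $A$ and $B$ through the common product, which is how the paper proceeds via Lemma \ref{PosLemHopp}. Fix $b$ and set $\lambda=\lambda_{p-1-b}$. If $(b+1)q-p\lambda$ is not in the support of $A$, then the product strictly decreases at that exponent; reading the decrease on the $B$-side forces $(b+1)q-p(\lambda+1)$ into the support of $B$. Choosing the minimal shift $a'$ in that residue class of $B$ produces a strict increase of the product, and reading the increase back on the $A$-side forces $(b+1)q-p(q-a')$, with $q-a'>\lambda$, into the support of $A$; hence $\lambda_{p-2-b}\geq\lambda_{p-1-b}$, and the nonzero constant terms give $\lambda_{p-1}=0$, so $\lambda$ (and likewise $\mu$) is a partition. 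Duality is then obtained by a similar two-way transfer showing that the order ideals of $\lambda$ and $\mu$ contain one another. Your residue-class bookkeeping is a sensible starting observation, but some version of this back-and-forth between the supports of $A$ and $B$ must actually be carried out; it cannot be replaced by a statement about $A_{\min}$ and $B_{\min}$ alone.
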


\begin{proof} Write the product above as $\sum \alpha_j t^j$.
Let $0 \leq b < \en-1$ and assume
$b\tu - \en\lambda$ occurs as a power in $A_{\min}(t)$, 
so $\lambda = \lambda_{\en-1-b}$.
We want to show that $\lambda_{\en-b-2} \geq \lambda_{\en-1-b}$.
If $(b+1)\tu - \en\lambda$ occurs as a power in $A(t)$ then clearly 
$\lambda_{\en-2-b} \geq \lambda = \lambda_{\en-1-b}$. 
So assume $(b+1)\tu - \en\lambda$ does not occur in $A(t)$.
By Lemma \ref{PosLemHopp} applied to $A(t) \xi_q(t)$:
\[ \alpha_{(b+1)\tu-\en\lambda} < \alpha_{(b+1)\tu-\en\lambda-1}\]
so $(b+1)\tu-\en(\lambda+1)$ is a power in $B(t)$.
We may now write
\[(b+1)\tu-\en(\lambda+1) = (\tu-\lambda-1)\en - \tu(\en-b-1). \]
There will then be an $a^\prime \leq \tu-\lambda-1$ such that 
$a^\prime \en - \tu(\en-b-1)$ is in $B(t)$ but
$(a^\prime -1) \en - \tu(\en-b-1)$ is not. 
By Lemma \ref{PosLemHopp} applied to $B(t) \xi_p(t)$: 
\[ \alpha_{a^\prime \en - \tu(\en-b-1)} > 
\alpha_{a^\prime \en - \tu(\en-b-1)-1}. \]
Now we may write
\[a^\prime \en - \tu(\en-b-1) = (b+1)\tu - \en(\tu-a^\prime)\]
and recall that  $\tu-a^\prime \geq \lambda + 1$.
Again by Lemma \ref{PosLemHopp} we get that the number in this equation 
will occur as a power
in $A(t)$. But this means that 
\[ \lambda_{\en-2-b} \geq \tu - a^\prime > \lambda 
= \lambda_{\en-1-b}. \]
Since $A(t)$ and equivalently $B(t)$ has nonzero constant term, we have
$\lambda_{\en-1} = 0$ so we get a partition $\lambda$.

An analog argument gives that the sequence of $\mu_i$'s also form a partition.

\medskip
Now let $T$ be the order ideal corresponding to $\lambda$ and $T^\prime$ the
order ideal corresponding to $\mu$. We show that they are equal and so
$\lambda$ and $\mu$ will be dual partitions.
Suppose $\lambda_{\en-b-1} < \lambda_{\en-b-2}$. 
Then $b\tu - \en \lambda_{p-b-2}$ is not in $A(t)$. By Lemma \ref{PosLemHopp}
\[ \alpha_{(b+1)\tu-\en\lambda_{\en-b-2}} > 
\alpha_{(b+1)\tu-\en \lambda_{\en-b-1}-1}.\]
And this implies again by Lemma \ref{PosLemHopp} that 
$(b+1)\tu-\en\lambda_{\en-b-2}$ occurs as a power in $B(t)$,
Rewriting, this is
$(\tu-1 - (\lambda_{\en-b-2}-1))\en - \tu(\en-b-1)$.
And this means that $(\lambda_{\en-b-2}-1, r)$ is in $T^\prime$ for some 
$r \geq \en-b-2$.
The upshot is that $T^\prime$ contains $T$. Analogously we could show
the opposite inclusion so these are in fact equal. 
\end{proof}

\begin{corollary} \label{PosconeCorABmin}
The polynomials $A(t) = \sum_{T,i} \gamma_{T,i} t^{c_{T,i}} A_T(t)$
and $B(t) = \sum_{T,i} \gamma_{T,i} t^{c_{T,i}} B_T(t)$
where the sum is over order ideals $T$ in $R(p,q)$ and a running index
$i$ for each $T$.
\end{corollary}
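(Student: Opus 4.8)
The plan is to prove Corollary \ref{PosconeCorABmin} by induction on the number of monomials appearing in $A(t)$ (equivalently, by a suitable measure of the ``size'' of the pair $(A,B)$, for instance $\sum$ of all coefficients of $A$). The base case is when $A(t)$ has exactly $p$ monomials; then, since the constant term is nonzero and the powers of $t$ occurring must be among the $a q - b p$ for $0 \le a < p$ (one for each residue class of the exponent modulo $\gcd(p,q)=1$, forced by comparing with the $p$ monomials of $B(t)$ through the cyclotomic identity), $A(t)$ must already equal $t^{c} A_T(t)$ for a single order ideal $T$ and shift $c$, by Proposition \ref{PosconePropABmin}. Correspondingly $B(t) = t^c B_T(t)$, using that the proposition identifies $A_{\min}$ with $A$ and $B_{\min}$ with $B$ and that $\lambda,\mu$ are dual, so $T$ is an order ideal in $R(p,q)$.

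For the inductive step, given $(A,B)$ with nonnegative coefficients, nonzero constant terms, and $A(t)\xi_q(t) = B(t)\xi_p(t)$, I would apply Proposition \ref{PosconePropABmin} to extract the dual partitions $\lambda,\mu$ and hence an order ideal $T \subseteq R(p,q)$, together with the scalars $\alpha_{a,\lambda_{p-1-a}}$ and $\beta_{a,\mu_{q-1-a}}$ that are the coefficients of $A_{\min}$ and $B_{\min}$. The key claim is that we may subtract off a positive multiple $\gamma\cdot A_T(t)$ (with an appropriate shift $t^{c}$ — here $c=0$ after normalizing, since $A_T$ and $A$ both have nonzero constant term) from $A$ and $\gamma\cdot t^{c}B_T(t)$ from $B$, where $\gamma = \min_a \alpha_{a,\lambda_{p-1-a}}$, and still have nonnegative coefficients. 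Because $A_T$ picks out exactly the ``outermost'' monomial in each of the $p$ relevant arithmetic progressions, and $A_{\min}$ consists precisely of those outermost monomials, subtracting $\gamma A_T$ only decreases the $A_{\min}$-coefficients and leaves everything nonnegative; moreover since $A_T\xi_q = B_T\xi_p$ the difference $(A - \gamma A_T,\, B - \gamma t^{c}B_T)$ again satisfies the cyclotomic identity. One then checks that the difference either is zero (and we stop) or still has nonzero constant term and strictly fewer monomials / smaller coefficient sum, so induction applies, yielding the asserted finite sums $A = \sum_{T,i}\gamma_{T,i}t^{c_{T,i}}A_T$ and $B = \sum_{T,i}\gamma_{T,i}t^{c_{T,i}}B_T$.

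The main obstacle I anticipate is the bookkeeping around shifts: a general $(A,B)$ in the cone need not have constant term $1$, nor need the ``blocks'' one peels off all sit at the same shift, so one must be careful that after subtracting $\gamma A_T$ the constant term can drop to zero, at which point the remaining polynomial is supported on higher powers of $t$ and must be re-expressed as $t^{c'}(\text{something with nonzero constant term})$ before reapplying the proposition — this is exactly why the corollary is stated with a running index $i$ and shifts $c_{T,i}$ rather than a single term per $T$. A second, more delicate point is verifying that subtracting $\gamma A_T$ genuinely preserves nonnegativity of \emph{all} coefficients of $A$, not just those in $A_{\min}$: this needs the observation that $A_T$ is supported entirely within $\bigcup_a \{a q - b p : b \le \lambda_{p-1-a}\}$ with coefficient $1$ exactly at the top exponent $aq - \lambda_{p-1-a}p$ and $0$ elsewhere, so the subtraction only touches the $A_{\min}$ part. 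Once these two points are handled, termination is immediate from the strict decrease in size, and the coefficients $\gamma_{T,i}$ are nonnegative by construction, which is what is needed for the description of the extremal rays in the next step.
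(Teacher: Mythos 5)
Your overall strategy is the same as the paper's: use Proposition \ref{PosconePropABmin} to read off an order ideal $T$ from $A_{\min}$ and $B_{\min}$, subtract a multiple of $(A_T,B_T)$, observe the identity $A\,\xi_q=B\,\xi_p$ persists, and induct on the size of the pair. But there is a genuine gap in the inductive step: you take $\gamma=\min_a \alpha_{a,\lambda_{p-1-a}}$, i.e.\ the minimum over the coefficients of $A_{\min}$ \emph{only}, and then assert that both $A-\gamma A_T$ and $B-\gamma B_T$ have nonnegative coefficients. You justify this for $A$ (correctly: the support of $A_T$ is exactly the set of $A_{\min}$-exponents, each with coefficient $1$), but you never address $B$. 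Since the support of $B_T$ is exactly the set of $B_{\min}$-exponents, nonnegativity of $B-\gamma B_T$ requires $\gamma\le\beta_{a,\mu_{q-1-a}}$ for every $a$, i.e.\ $\gamma\le$ the minimum of the $B_{\min}$-coefficients. The coefficients of $A_{\min}$ and $B_{\min}$ are not the same numbers (they agree only at the exponents coming from maximal elements of $T$ and at the constant term, via Lemma \ref{PosLemHopp}; at the remaining ``non-corner'' classes they are a priori unrelated), so the inequality $\min(A_{\min})\le\min(B_{\min})$ is a nontrivial claim that you neither state nor prove. If it fails for some pair, your subtraction produces a $B'$ with a negative coefficient, Proposition \ref{PosconePropABmin} no longer applies, and the induction breaks.

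The fix is exactly what the paper does: choose the scalar to be the minimal positive coefficient occurring in $A_{\min}$ \emph{and} $B_{\min}$ together; then both differences are manifestly nonnegative, the identity is preserved, and the total number of terms strictly drops, so the induction closes. (Alternatively you could try to prove that the two minima coincide, using chains of $A_{\min}$- resp.\ $B_{\min}$-exponents differing by $q$ resp.\ $p$ and Lemma \ref{PosLemHopp}, but that is extra work the argument does not need.) Your other bookkeeping points — renormalizing by a power of $t$ when the constant term drops to zero, and the fact that subtracting $\gamma A_T$ only affects the $A_{\min}$-part of $A$ — are fine and consistent with the paper's (terser) treatment.
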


\begin{proof} Let  $\alpha$ be the minimal positive coefficient of 
$A_{\min}(t)$ and $B_{\min}(t)$ and suppose these correspond to the 
order ideal $T$. Then we can subtract off
$\alpha A_T(t)$ from $A(t)$ getting $A^\prime(t)$ and
similarly subtract off $\alpha B_T(t)$ and get $B^\prime(t)$.
Then also \[ A^\prime(t) \xi_\tu(t) = B^\prime(t) \xi_\en(t) \]
and we may proceed inductively, since then new polynomials have no more
terms than the original ones, and one of them strictly less.
\end{proof}

From this we obtain our goal of describing the extremal rays of the cone 
$P^\prime$ described at the end of Subsection \ref{SetSubsecLinto}.

\begin{theorem} \label{PosconeTheMain}
Let $e_1 = m\tu$ and $e_2 = m \en$ where $\en$ and $\tu$ are relatively
prime. The rays generated by $(t^cA_T(t^m), t^cB_T(t^m))$ where 
$T$ is an order ideal in 
$R(\en,\tu)$ and $c \in \hele$, are the extremal rays in the cone 
$P^\prime(e_1, e_2)$. 
In particular any element in this cone
may be written as a positive linear combination of these elements.
\end{theorem}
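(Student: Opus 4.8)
The statement follows by assembling the pieces already in place. First I would reduce the general case to the coprime case. The map $S \to S$, $x_i \mapsto x_i^m$, and the associated functor $\cF \mapsto \cF^{(m)}$ discussed in Subsection 1.3 sends a pure complex with difference vector $(\tu, \en)$ to one with difference vector $(m\tu, m\en) = (e_1, e_2)$; on the level of the Laurent-polynomial description of $P^\prime$ at the end of Subsection \ref{SetSubsecLinto}, this is exactly the substitution $t \mapsto t^m$. By Corollary \ref{SetCorLinpoly} (or directly Lemma \ref{SetLemB}), every pair $(A, B)$ in $P^\prime(e_1, e_2)$ is of the form $(f(t,u)\cdot\xi_\tu(t^m,u^m),\, f(t,u)\cdot(tu)^{\tu+\en}\xi_\tu(t^m,u^m))$ — more to the point, dehomogenising, every element of $P^\prime(e_1,e_2)$ has the form $(A(t), B(t))$ with $A(t)\xi_\en(t^m) = B(t)\xi_\tu(t^m)$ and nonnegative coefficients, and the powers of $t$ occurring split according to residues so that $A(t) = \sum_{c} A_c(t^m)t^c$ summed over residues $c$ modulo $m$, with each $(A_c, B_c)$ satisfying $A_c(t)\xi_\en(t) = B_c(t)\xi_\tu(t)$ in the coprime setting. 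Thus it suffices to treat $m = 1$ and then reinstate the substitution $t \mapsto t^m$ and the twists $t^c$.

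Second, in the coprime case, Corollary \ref{PosconeCorABmin} already expresses an arbitrary $(A, B) \in P^\prime(\en, \tu)$ with nonzero constant terms as a nonnegative combination $\sum_{T,i} \gamma_{T,i}(t^{c_{T,i}}A_T(t),\, t^{c_{T,i}}B_T(t))$ over order ideals $T$ in $R(\en,\tu)$. An element of $P^\prime$ without nonzero constant term is simply a twist $t^c$ of one with nonzero constant term, so this covers all of $P^\prime(\en,\tu)$. This gives the spanning (second) assertion of the theorem. I would be slightly careful here to note that the coefficients $\gamma_{T,i}$ produced by the induction in Corollary \ref{PosconeCorABmin} are genuinely nonnegative (they are the successive minimal positive coefficients being peeled off), and that the induction terminates because at each step the total number of monomials strictly decreases.

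Third, I must show each ray $\RR_{\geq 0}\cdot(t^c A_T(t^m), t^c B_T(t^m))$ is extremal, i.e. indecomposable in $P^\prime$. Twisting by $t^c$ and the substitution $t\mapsto t^m$ are both injective linear maps preserving the cone, so it is enough to show $(A_T, B_T)$ is extremal in $P^\prime(\en,\tu)$ for coprime $\en,\tu$. Suppose $(A_T, B_T) = (A^{(1)},B^{(1)}) + (A^{(2)}, B^{(2)})$ with both summands in $P^\prime(\en,\tu)$. Since $A_T$ has all coefficients $0$ or $1$ (coprimality, as noted in the proof of Proposition 2.6) and the summands have nonnegative coefficients, the monomials of $A^{(1)}$ and $A^{(2)}$ each form a subset of those of $A_T$; the same for $B_T$. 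Now $A^{(1)}$ has some nonzero constant term contribution or not — reorganising, I can assume by scaling that each $A^{(j)}$ has nonzero constant term (the constant term of $A_T$ is $1$, split as a sum of two nonnegative reals; if one is zero that factor is forced to be $0$ by the argument below, since a nonzero element of $P^\prime$ with vanishing constant term is a nontrivial twist, which cannot be dominated monomial-wise by the untwisted $A_T$). Applying Proposition \ref{PosconePropABmin} to each $(A^{(j)}, B^{(j)})$ produces dual partitions $\lambda^{(j)}, \mu^{(j)}$, hence order ideals $T^{(j)} \subseteq R(\en, \tu)$, with $A^{(j)}_{\min} = $ (scalar)$\cdot A_{T^{(j)}}$ on its support. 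Since $A^{(1)} + A^{(2)} = A_T$ has each monomial with coefficient exactly $1$ and $A_T = A_{T,\min}$ itself, comparing the "minimal" (top-$b$, i.e. maximal-twist-exponent within each residue class $a$) monomials forces $\lambda^{(1)} = \lambda^{(2)} = \lambda$, whence $A^{(j)} = \gamma_j A_T$ and $B^{(j)} = \gamma_j B_T$ with $\gamma_1 + \gamma_2 = 1$. This is the decomposition being on the ray.

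The step I expect to be the main obstacle is the extremality argument in the third paragraph: one must carefully use coprimality (all coefficients $0/1$) together with Proposition \ref{PosconePropABmin} to rule out any "diagonal" splitting in which a summand's minimal partition differs from $\lambda$, and to handle the bookkeeping of which residue classes mod $m$ and which twists $t^c$ can interact — in particular that distinct $(T, c)$ pairs (up to the obvious identification) really do give distinct, non-proportional rays, so that the extremal set is described without redundancy. Everything else is a direct citation of Lemma \ref{SetLemB}, Corollary \ref{PosconeCorABmin}, and the substitution/twist functoriality of Subsection 1.3.
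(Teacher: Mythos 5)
Your reduction to the coprime case (splitting $A(t)=\sum_{i=0}^{m-1}t^iA_i(t^m)$ by residues mod $m$, since $\xi_q(t^m)$ and $\xi_p(t^m)$ only involve powers of $t^m$) and your appeal to Corollary \ref{PosconeCorABmin} for the generation statement are exactly the paper's proof, which in fact stops there and leaves the extremality of the listed rays implicit; the slips in your first paragraph (writing $\xi_q$ twice in the triple from Corollary \ref{SetCorLinpoly}, and swapping $\xi_p$ and $\xi_q$ in the dehomogenised equation) are harmless.

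The genuine gap is at the crux of your third paragraph. Proposition \ref{PosconePropABmin} only controls the \emph{exponent pattern} of $A^{(j)}_{\min}$ (it comes from a partition); it says nothing about the coefficients $\alpha_{a,\lambda_{p-1-a}}$, so the assertion ``$A^{(j)}_{\min}=\text{(scalar)}\cdot A_{T^{(j)}}$'' is not what that proposition gives, and the inference ``$\lambda^{(1)}=\lambda^{(2)}=\lambda$, whence $A^{(j)}=\gamma_jA_T$'' does not follow: knowing that the support of $A^{(j)}$ sits inside the support of $A_T$ (even that it equals it) does not force the coefficients of $A^{(j)}$ to be constant across that support, which is precisely what proportionality to $A_T$ requires; moreover, to even get $\lambda^{(j)}=\lambda$ you must first rule out that $A^{(j)}$ misses some residue class of exponents modulo $p$ entirely. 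Both points can be repaired in one stroke. Since $\gcd(p,q)=1$, the exponents of $A_T$ occupy exactly one residue class mod $p$ each. Evaluate $A^{(j)}(t)\,\xi_q(t)=B^{(j)}(t)\,\xi_p(t)$ at a $p$-th root of unity $\omega\neq 1$: then $\xi_p(\omega)=0$ while $\xi_q(\omega)\neq 0$ (coprimality), so $A^{(j)}(\omega)=0$ for all such $\omega$, i.e.\ the coefficient sums of $A^{(j)}$ over the residue classes mod $p$ are all equal to some $\gamma_j\geq 0$. Since each class contains at most one exponent of $A^{(j)}$ (support containment in $A_T$), this gives $A^{(j)}=\gamma_jA_T$, and then $B^{(j)}\xi_p=\gamma_jA_T\xi_q=\gamma_jB_T\xi_p$ forces $B^{(j)}=\gamma_jB_T$. (Alternatively, apply Corollary \ref{PosconeCorABmin} to the summand $(A^{(j)},B^{(j)})$ and note that every $t^cA_{T'}$ occurring in that decomposition has exactly one exponent per residue class mod $p$, hence must coincide with $A_T$.) With this replacement, and the observation that twisting by $t^c$ and substituting $t\mapsto t^m$ preserve the cone, your outline is complete and in fact supplies the extremality verification that the paper's own proof omits.
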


\begin{proof}
In the case $m=1$ this follows immediately from Proposition \ref{PosconeCorABmin}.
Suppose then $m > 1$. If we have 
\[  A(t) \xi_\tu(t^m) = B(t) \xi_\en(t^m) \]
we may write $A(t) = \sum_{i = 0}^{m-1} t^i A_i(t^m)$ and correspondingly for $B(t)$.
We must then have the equations
\[ A_i(t^m) \xi_\tu(t^m) = B_i(t^m) \xi_\en(t^m) \]
for each $i$. By Corollary  \ref{PosconeCorABmin} we may then conclude.
\end{proof}

\begin{remark} Such a positive linear combination is in general not unique.
\end{remark}

\begin{remark}
We see that the extremal rays fall into classes, one for each order ideal $T$ in 
$R(\en,\tu)$. These form a poset with a minimal element $T = \emptyset$ and a
maximal element $\hat T$. In Remarks \ref{PosRemMin} and \ref{PosRemMax} 
we showed that these correspond to Betti diagrams of well known resolutions.
\end{remark}


\section{Existence of resolutions} \label{EksisSec}

We will now show that for any extremal ray in $P^\prime (e_1,e_2)$ 
there is a resolution whose Betti diagram is in this extremal ray.
This will show that $P^\prime(e_1,e_2) = P(e_1,e_2)$. 

Given an order ideal $T$ in $R(p,q)$ where $p$ and $q$ are relatively prime.
If $e_1 = mp$ and $e_2 = mq$, we have the two
polynomials
$A_T(t^m)$ and $B_T(t^m)$. Homogenising these we may construct an
associated triple  $B_0,B_1,B_2$ fulfilling the equations of  
Lemma \ref{SetLemB}, with positive integer coefficients.
These lie on an extremal ray in $P^\prime(e_1,e_1)$.
Note that in $B_0$ and $B_2$ each monomial occurs with coefficient $0$ or $1$
and similarly for $B_2$.  We may therefore apply the following proposition whose 
proof will occupy this section.

\begin{proposition} \label{EksisPropTreB}
Let $(B_0,B_1,B_2)$ be a triple of homogeneous 
Laurent polynomials of increasing degrees, fulfilling the HK-equations
(\ref{SetLigHK}). If the coefficients of each monomial of $B_0$ and $B_2$
is $0$ or $1$, there is a resolution
\begin{equation} \label{EksiLigCx}
 S.B_0(t,u) \vmto{\alpha} S.B_1(t,u) \vmto{\beta} S.B_2(t,u)
\end{equation}
of an artinian $S$-module.
\end{proposition}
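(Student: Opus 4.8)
The plan is to construct the maps $\alpha$ and $\beta$ explicitly from the combinatorics of the polynomials $B_0,B_1,B_2$, and then verify that the resulting complex is exact and resolves an artinian module by a rank/length count. First I would set up notation: write $S.B_0 = \bigoplus_{\bfa} S(-\bfa)$ where $\bfa$ runs over the monomials of $B_0$ (each with coefficient $1$), and similarly for $B_2$; for $B_1$ the generators may occur with multiplicity, but since $B_1 = u^{-e_2}B_2 + u^{e_1}B_0 = t^{-e_2}B_2 + t^{e_1}B_0$ (equation (\ref{SetLigB03})), each generator of $S.B_1$ in a given multidegree $\bfb$ is accounted for either by a generator $\bfb - (0,e_1)$-type twist of $B_0$ or a generator of $B_2$. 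The idea is to build $\beta : S.B_1 \to S.B_2$ so that each generator of $S.B_1$ coming from the ``$u^{-e_2}B_2$ part'' maps by a monomial to the corresponding generator of $S.B_2$, and each generator coming from the ``$u^{e_1}B_0$ part'' maps to $0$ — equivalently, arrange $\beta$ to be essentially a split surjection plus a syzygy-producing piece. More precisely I would order the monomials of $B_0$ and of $B_2$ by the exponent of $t$ (which, by homogeneity, totally orders them), and use the two expressions for $B_1$ to identify a ``staircase'' structure, defining $\beta$ on each $B_1$-generator as an appropriate monomial $x_1^a x_2^b$ times a basis vector of $S.B_2$, and $\alpha$ so that its image is exactly $\ker\beta$.

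The key structural input is the dehomogenized identity at the end of Subsection \ref{SetSubsecLinto}, namely $B_2^{dh}/t^{e_2}\cdot\xi_{\en}(t^{\fel}) = B_0^{dh}\cdot\xi_{\tu}(t^{\fel})$, together with Lemma \ref{PosLemHopp}, which controls exactly at which exponents $B_1^{dh}$ jumps up or down. The point is that the positions where consecutive generators of $S.B_1$ fail to be ``connected'' in the staircase are precisely the positions where generators of $S.B_0$ sit (for the $\alpha$ side) or generators of $S.B_2$ sit (for the $\beta$ side). So I would use Lemma \ref{PosLemHopp} to match up: (i) each generator of $B_2$ receives, via $\beta$, a monomial image from a consecutive pair of $B_1$-generators, giving surjectivity of $\beta$ in high degrees and a computation of $\coker\beta$; (ii) the kernel of $\beta$ is generated by the ``jump-down'' relations, which are in bijection with the generators of $B_0$, and these define $\alpha$; (iii) $\alpha$ is injective because $B_0 \to B_1$ is a map into a free module whose determinantal-type minors are nonzero monomials (here the hypothesis that $B_0$ has $0/1$ coefficients is essential — it makes the relevant square submatrices diagonal or triangular with monomial entries).

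Once the complex is written down, exactness should follow from the Buchsbaum–Eisenbud acyclicity criterion (or, in this very explicit codimension-two situation, directly): one checks $\rank\alpha + \rank\beta$ equals $\rank S.B_1$, that the ideal of maximal minors of $\beta$ contains a power of $\mm = (x_1,x_2)$ (which forces $\coker\beta$ to be artinian of the right codimension), and that the ideal of maximal minors of $\alpha$ likewise has the expected depth. The Herzog–K\"uhl equations (\ref{SetLigHK}), which the triple satisfies by hypothesis, guarantee that the alternating sum of the $B_i$ gives a Hilbert series with the right vanishing, so the cokernel is forced to be artinian; combined with acyclicity this gives a resolution. I expect the main obstacle to be step (ii)/(iii): producing $\alpha$ explicitly and proving it is injective with $\im\alpha = \ker\beta$ — this is where one must really exploit the $0/1$-coefficient hypothesis on $B_0$ and $B_2$ and the precise staircase combinatorics coming from Lemma \ref{PosLemHopp}, rather than a soft argument. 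A clean way to organize it may be to realize $S.B_2 \vpil S.B_1 \vpil S.B_0$, after dehomogenizing, as (a twist of) the mapping cone or a direct sum decomposition matching the two expressions for $B_1$, reducing exactness to exactness of two elementary Koszul-type pieces.
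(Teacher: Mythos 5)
There is a genuine gap: the construction of the maps, which is the entire content of the proposition, is never actually carried out, and the explicit recipe you do give fails. First, your maps go the wrong way for degree reasons: in the complex (\ref{EksiLigCx}) the differentials are $\alpha\colon S.B_1\to S.B_0$ and $\beta\colon S.B_2\to S.B_1$, and a generator of $S.B_1$ of degree $\bfb$ cannot map to a nonzero multiple of a generator of $S.B_2$, whose degree exceeds $\bfb$; so ``$\beta\colon S.B_1\to S.B_2$'' and ``$\alpha$ with image $\ker\beta$'' cannot be taken literally, and even after transposing, the split description is wrong. If (in the correct orientation) each generator of $S.B_2$ maps to a single monomial times the $B_1$-generator matched to it by $B_1=u^{-e_2}B_2+u^{e_1}B_0$, and $\alpha$ vanishes on those columns, then the ideal of maximal minors of $\beta$ is generated by a single monomial (codimension $\le 1$), so exactness in the middle fails; dually, every entry in the row of $\alpha$ corresponding to the top generator of $B_0$ becomes divisible by one variable, so $\coker\alpha$ is not artinian. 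One can check this already on the equivariant example (\ref{IntroLigEkvi}). The correct structure is precisely that $\alpha$ must be supported on the full ``staircase'' of admissible positions, with nonzero entries at both ends of every row, and each column of $\beta$ must have at least two nonzero entries; your fallback phrase ``staircase structure'' points in this direction but leaves exactly this combinatorial content unproved. (Also, the $0/1$ hypothesis is not what makes anything triangular; the paper remarks the statement holds for arbitrary nonnegative coefficients and the hypothesis is only for convenience.)

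For comparison, the paper's proof does use your three verification targets (codimension two degeneracy of $\alpha$ and of $\beta^\vee$, plus $\alpha\circ\beta=0$, which is the codimension-two avatar of Buchsbaum--Eisenbud), but it does not attempt explicit monomial matrices. Instead it encodes the admissible support of $\alpha$ and $\beta^\vee$ as \emph{strict thick diagonals} (Lemma \ref{EksisLemSe}, Corollary \ref{EksisCorStrict}, using the HK-equations), takes $\alpha$ to be a \emph{generic} matrix of that shape (so it degenerates in codimension two by Lemma \ref{EksisLemCoker}), and then constructs each column of $\beta$ as a kernel vector of the relevant column-window of $\alpha$; Lemma \ref{EksisLemBigrad} gives the size count for that window and Lemmas \ref{EksisLemRekt} and \ref{EksisLemAB} guarantee the kernel vector is nonzero in the first and last admissible positions, which is what makes $\beta^\vee$ degenerate in codimension two. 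Your proposal would need an analogous mechanism --- either a genericity argument or a genuinely explicit staircase matrix with verified relations $\alpha\beta=0$ and minor conditions --- and as written it supplies neither.
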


As a consequence we get the following.

\begin{theorem} \label{EksisTheMain}
Let $e_1 = m \tu$ and $e_2 = m \en$ where $p$
and $q$ are relatively prime. Let $(B_0, B_1, B_2)$ be the triple of 
homogeneous Laurent polynomials associated to an order ideal $T$ in $R(p,q)$, 
with $t^m$ as argument.
Then this is a triple of Betti polynomials associated to a bigraded artinian
module. Hence the cone $P(e_1, e_2) = P^\prime(e_1,e_2)$.
\end{theorem}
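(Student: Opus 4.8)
The plan is to obtain the theorem as a direct consequence of Proposition \ref{EksisPropTreB} together with the extremal‑ray description of $P^\prime(e_1,e_2)$ already in Section \ref{PosSec}, so the argument is short. First I would recall, as in the discussion preceding Proposition \ref{EksisPropTreB}, that from an order ideal $T$ in $R(p,q)$ (with $p$ and $q$ relatively prime and $m=\gcd(e_1,e_2)$ as in the statement) one homogenises the one‑variable polynomials $A_T(t^m)$ and $B_T(t^m)$ --- after the shift by the power of $t$ dictated by the dehomogenisation identity at the end of Subsection \ref{SetSubsecLinto} --- to obtain a triple $(B_0,B_1,B_2)$ of homogeneous Laurent polynomials of strictly increasing degrees with successive differences $e_1,e_2$, where $B_1$ is defined through (\ref{SetLigB03}). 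By Lemma \ref{SetLemB} this triple lies in $L(e_1,e_2)=L^\prime(e_1,e_2)$, hence satisfies the HK‑equations (\ref{SetLigHK}); its coefficients are nonnegative integers, and since $p$ and $q$ are relatively prime every coefficient of $A_T$ and of $B_T$, and therefore of $B_0$ and of $B_2$ after the substitution $t\mapsto t^m$ and homogenisation, is $0$ or $1$. Thus the hypotheses of Proposition \ref{EksisPropTreB} are met, and it produces a resolution $S.B_0\vpil S.B_1\vpil S.B_2$ of an artinian $S$‑module; consequently $(B_0,B_1,B_2)$ is a triple of Betti polynomials of a bigraded artinian module, and the associated Betti diagram lies in $P(e_1,e_2)$.

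For the final equality $P(e_1,e_2)=P^\prime(e_1,e_2)$ I would use the natural injection $P(e_1,e_2)\hookrightarrow P^\prime(e_1,e_2)$: it suffices to show that every generator of an extremal ray of $P^\prime(e_1,e_2)$ lies in $P(e_1,e_2)$. By Theorem \ref{PosconeTheMain} each such generator is, under the identification of Subsection \ref{SetSubsecLinto}, a pair $(t^cA_T(t^m),t^cB_T(t^m))$ with $T$ an order ideal in $R(p,q)$ and $c\in\hele$. Absorbing the shift $t^c$ into $A_T$ and $B_T$ before homogenising --- equivalently, twisting the module constructed above by a suitable element of $\hele^2$, which by the twisting observation in Subsection \ref{SetSubsecLinto} is again a resolution of an artinian module --- the previous paragraph shows this generator lies in $P(e_1,e_2)$. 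Since $P(e_1,e_2)$ is closed under nonnegative linear combinations and, again by Theorem \ref{PosconeTheMain}, every element of $P^\prime(e_1,e_2)$ is such a combination of these generators, it follows that $P^\prime(e_1,e_2)\subseteq P(e_1,e_2)$, so the two cones coincide.

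The genuine difficulty lies entirely in Proposition \ref{EksisPropTreB}, which I am assuming here: one must write down explicit bigraded maps $\alpha$ and $\beta$ over $S$ realising the prescribed diagram in which every multiplicity is $0$ or $1$, check that $\beta$ is injective and that the complex is exact in the middle --- where the HK‑equations are precisely what makes the Hilbert‑series (rank) count balance --- and verify that the cokernel has finite length. For the theorem itself, beyond invoking that proposition, the only point that needs care is the bookkeeping in the second paragraph: confirming that the one‑parameter family of extremal‑ray representatives of Theorem \ref{PosconeTheMain}, indexed only by the single shift $c\in\hele$ visible after dehomogenising with respect to $u$, together with the choice of homogenisation degree, corresponds to genuine $\hele^2$‑twists of the modules produced in the first paragraph.
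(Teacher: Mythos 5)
Your proposal is correct and follows essentially the same route as the paper: the paper likewise obtains the theorem by homogenising $A_T(t^m)$, $B_T(t^m)$ to a triple satisfying Lemma \ref{SetLemB} (hence the HK-equations) with $0/1$ coefficients in $B_0$ and $B_2$, applying Proposition \ref{EksisPropTreB}, and then using Theorem \ref{PosconeTheMain} together with twisting to conclude $P(e_1,e_2)=P^\prime(e_1,e_2)$. Your extra bookkeeping on twists and the dehomogenised identification only spells out what the paper leaves implicit.
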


\begin{remark}
Proposition \ref{EksisPropTreB} holds for any $B_0$ and $B_2$ with 
nonnegative integer
coefficients. But for ease of demonstration we make the above assumptions.
\end{remark}

\begin{remark}
In the case of three variables it is not true that $P(e_1,e_2,e_2)$
is equal to $P^\prime(e_1,e_2,e_3)$. We provide an example where this
is not so in the last section.
\end{remark}

We shall prove the above proposition towards the end of this section.
But the following outlines what we need to show.
Since $\ker \alpha$ is a free module, 
$\ker \alpha/ \im \beta$ will be either $0$ or nonzero of codimension one
or zero.
But the latter is equivalent to $\coker \beta^{\vee}$ being of codimension 
one or zero. Hence we need to show the following.
\begin{itemize}
\item $\coker \alpha$ is of codimension two.
\item $\coker \beta^\vee$ is of codimension two.
\item The composition $\alpha \circ \beta = 0$.
\end{itemize}

First we have the following.

\begin{lemma} \label{EksisLemBigrad}
Given a bidegree $(i,j)$ with $i+j \geq \deg B_2(t,u)-1$. 
Then the dimension of the bigraded part $S.B_1(t,u)_{i,j}$ is the sum of the 
dimensions of $S.B_0(t,u)_{i,j}$ and $S.B_2(t,u)_{i,j}$.
\end{lemma}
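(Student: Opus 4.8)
The plan is to use the Herzog--Kühl equations together with a dimension count coming from the exact sequence that the resolution of an artinian module eventually becomes. First I would recall that, since $(B_0,B_1,B_2)$ fulfils the HK-equations \eqref{SetLigHK}, the alternating sum of the multigraded Hilbert series vanishes against $(1-t)(1-u)$; more concretely, the polynomial
\[
H(t,u) \;=\; \frac{B_0(t,u) - B_1(t,u) + B_2(t,u)}{(1-t)(1-u)}
\]
is an honest Laurent polynomial (this is exactly what the HK-equations say: setting $t=1$ or $u=1$ in the numerator gives zero). Since the $B_i$ have increasing total degrees, $H(t,u)$ is supported in total degrees $< \deg B_2(t,u)$, and in fact in total degrees $\le \deg B_2(t,u) - 2$ because the top-degree term of $B_2$ survives. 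So for any $(i,j)$ with $i+j \ge \deg B_2(t,u) - 1$ the coefficient of $t^i u^j$ in $H(t,u)$ is zero.

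Next I would translate this into the claimed dimension identity. For a free module $S.B_i$ the generating function of bigraded dimensions is $B_i(t,u)/((1-t)(1-u))$, i.e.
\[
\dim_\kk (S.B_i)_{i,j} \;=\; [t^i u^j]\,\frac{B_i(t,u)}{(1-t)(1-u)}.
\]
Combining the three, the coefficient of $t^i u^j$ in
\[
\frac{B_0(t,u)}{(1-t)(1-u)} - \frac{B_1(t,u)}{(1-t)(1-u)} + \frac{B_2(t,u)}{(1-t)(1-u)}
\]
equals $\dim (S.B_0)_{i,j} - \dim (S.B_1)_{i,j} + \dim (S.B_2)_{i,j}$. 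But this is precisely $[t^i u^j] H(t,u)$, which vanishes for $i+j \ge \deg B_2(t,u) - 1$ by the previous paragraph. Rearranging gives $\dim (S.B_1)_{i,j} = \dim (S.B_0)_{i,j} + \dim (S.B_2)_{i,j}$, as desired.

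There is essentially no hard obstacle here; the one point requiring a little care is the bound $i+j \ge \deg B_2(t,u) - 1$ rather than $\ge \deg B_2(t,u)$. This is why I want to observe that the leading term of the numerator $B_0 - B_1 + B_2$ in top total degree is just the leading term of $B_2$ (the degrees are strictly increasing), so dividing by $(1-t)(1-u)$, whose lowest-degree term is $1$ of degree $0$, the quotient $H(t,u)$ has top total degree exactly $\deg B_2(t,u) - 2$. Hence $[t^i u^j]H = 0$ already for $i + j \ge \deg B_2(t,u) - 1$, which is the stated range. I would also remark in passing that for this last step one uses that $H$ is a polynomial (no negative powers appear once we clear denominators), which again is the content of the HK-equations; no positivity of the $B_i$ is needed, only that they satisfy \eqref{SetLigHK} and have increasing degrees.
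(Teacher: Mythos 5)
Your proof is correct and follows essentially the same route as the paper: the HK-equations make $H=(B_0-B_1+B_2)/\bigl((1-t)(1-u)\bigr)$ a Laurent polynomial whose coefficients are the alternating sums of the bigraded dimensions, and its support is bounded in total degree by $\deg B_2(t,u)-2$. One small wording fix: the bound on $\deg H$ comes from the \emph{top} term $tu$ of $(1-t)(1-u)$ (the top homogeneous part of $H$ times $tu$ cannot cancel, so $\deg H = \deg B_2 - 2$), not from its lowest-degree term $1$; with that adjustment your argument is the paper's.
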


\begin{proof}
The bigraded Hilbert function is
\[ h(t,u) = \frac{\sum_{i,\bfa} (-1)^i \beta_{i, \bfa} \cdot t^{a_1}t^{a_2}} 
{(1-t)(1-u)} \]
for some polynomial $h$. Writing $h(t,u)$ as $\sum \alpha_{i,j} t^iu^j$,
the coefficient $\alpha_{i,j}$ will be the alternating sum 
of the dimensions of the $S.B_p(t,u)_{i,j}$.
We will show that $\alpha_{i,j} = 0$ for $i+j \geq \deg B_2(t,u)-1$.
But if such a coefficient is nonzero, the pair $(i+1,j+1)$ must
occur as a power in the numerator in the fraction above. But this implies
in turn that $i+j+2$ is less or equal to the degree of $B_2(t,u)$.
\end{proof}

To facilitate the discussion we now introduce some notation. 
Let $s, e: [1, \ldots, n] \pil [1, \ldots, m]$ 
be two weakly increasing functions such that $s(i) \leq e(i)$. 
The subset $D = \{ (i,j) \, | \, s(i) \leq j \leq e(i) \}$ 
of  $[1, \ldots, n] \times [1, \ldots, m]$ is a {\it thick 
diagonal}. We then write $s = s_D$ and $e = e_D$.
If $s(1) = 1$ and $e(n) = m$ and $s$ and $e$ are strictly increasing 
we call $D$ a {\it strict} thick diagonal. If $s$ is only strictly increasing
as soon as $s(i) > 1$  and $e$ is only strictly increasing as long as 
$e(i) < m$, we call $D$ {\it semi-strict}.




Let $B_0(1,1) = p$ and $B_2(1,1) = q$ and write $B_0(t,u) = 
\sum_{i = 1}^{p} t^{a^1_i} u^{a^2_i}$ where $\{a^1_i\}$ is strictly
increasing and  $\{a^2_i\}$ is strictly decreasing.
Similarly for $B_2(t,u)$ with pairs $(c^1_k, c^2_k)$ 
and for $B_1(t,u)$ with pairs $(b^1_j, b^2_j)$ but now with
the $\{b^1_j\}$ only weakly increasing and the $\{b^2_j\}$ only weakly decreasing.

We may now note that the positions where $\alpha$ may have nonzero entries,
i.e. those pairs $(i,j)$ such that $(a^1_i, a^2_i) \leq (b^1_j, b^2_j)$, 
form a thick diagonal $D_\alpha$ of $[1, \ldots, p] \times [1, \ldots, p+q]$. 
It has no zero rows because of the HK-equations (\ref{SetLigHK}) : for
each $(a^1_i,a^2_i)$ there is a $(b^1_j, b^2_j)$ with $a^1_i = b^1_j$. Similarly
we have a thick diagonal $D_{\beta^\vee}$ in 
$[1,\ldots,q] \times [1, \ldots, p+q]$.

\begin{lemma} \label{EksisLemSe} 
a. $s_{D_\alpha}(i) = j$ if and only if $j$ is the smallest
index such that $a^1_i = b^1_j$. 

b. $e_{D_\alpha}(i) = j$ if and only if $j$ is the largest index for which
$a^2_i = b^2_j$.

\noindent The analog result holds for $D_{\beta^\vee}$.
\end{lemma}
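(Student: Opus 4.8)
The statement is essentially a bookkeeping fact about the shape of the thick diagonal $D_\alpha$, so the plan is to unwind the definitions of $s_{D_\alpha}$ and $e_{D_\alpha}$ and compare them with the monotonicity properties of the exponent sequences. Recall that $D_\alpha = \{(i,j)\,|\,(a^1_i,a^2_i)\leq(b^1_j,b^2_j)\}$, that $\{a^1_i\}$ is strictly increasing and $\{a^2_i\}$ strictly decreasing, while $\{b^1_j\}$ is weakly increasing and $\{b^2_j\}$ weakly decreasing, and that $B_1 = u^{-\en\fel}B_2 + u^{\tu\fel}B_0$ from equation (\ref{SetLigB03}). First I would record the two consequences of this last identity that we actually use: every exponent pair $(a^1_i,a^2_i)$ of $B_0$ (shifted by $(0,\tu\fel)$, but degrees only matter up to this common twist so I will suppress it) appears among the $(b^1_j,b^2_j)$, and likewise every pair of $B_2$ appears; in particular for each $i$ there is some $j$ with $b^1_j = a^1_i$ and some $j'$ with $b^2_{j'} = a^2_i$. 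This is exactly the "no zero rows" observation already made in the text.

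For part (a): by definition $s_{D_\alpha}(i)$ is the least $j$ with $(a^1_i,a^2_i)\leq(b^1_j,b^2_j)$. I would first show that the least such $j$ in fact satisfies $a^1_i = b^1_j$ (not merely $a^1_i\le b^1_j$). Indeed, among all $j$ with $(i,j)\in D_\alpha$ the smallest one, call it $j_0$, cannot have $b^1_{j_0} > a^1_i$: if it did, then since there exists some index $j'$ with $b^1_{j'}=a^1_i < b^1_{j_0}$ and $\{b^1_j\}$ is weakly increasing we get $j' < j_0$; and for that $j'$ one checks $a^2_i \le b^2_{j'}$ — this is where one uses that the pair $(a^1_i,a^2_i)=(a^1_i, a^2_i)$ together with weak decrease of $\{b^2_j\}$ forces $b^2_{j'}\ge b^2_{j_0}\ge a^2_i$ — so $(i,j')\in D_\alpha$ with $j' < j_0$, a contradiction. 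Hence $b^1_{j_0}=a^1_i$. Conversely, if $j$ is the smallest index with $a^1_i = b^1_j$, then weak decrease of $\{b^2_j\}$ beyond the block where $b^1 = a^1_i$ shows $b^2_j$ is the largest $b^2$-value on that block, so $b^2_j \ge a^2_i$ (using that some $j'$ on that block realizes $b^2_{j'} = a^2_i$ by the no-zero-rows remark, and $j'\ge j$), giving $(i,j)\in D_\alpha$; and any smaller index has $b^1 < a^1_i$, hence is not in $D_\alpha$. So $s_{D_\alpha}(i)$ is exactly that smallest index, as claimed.

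Part (b) is the mirror image: $e_{D_\alpha}(i)$ is the largest $j$ with $(a^1_i,a^2_i)\le(b^1_j,b^2_j)$, and by the symmetric argument — swapping the roles of the two coordinates and of "increasing/smallest" with "decreasing/largest" — the largest such $j$ has $a^2_i = b^2_j$ and is the largest index with that property. The analog for $D_{\beta^\vee}$ follows verbatim after dualizing, i.e. replacing $B_1$ by $B_1^\vee$ and $B_0$ by $B_2^\vee$, since (\ref{SetLigB03}) is symmetric in the two summands $u^{-\en\fel}B_2$ and $u^{\tu\fel}B_0$ and the same monotonicity of exponent sequences holds. I do not expect a genuine obstacle here; the only point requiring care is making sure that when one asserts "some index on the $b^1=a^1_i$ block realizes $b^2 = a^2_i$" one is invoking the correct half of (\ref{SetLigB03}) — the $u^{\tu\fel}B_0$ summand contributes the pair $(a^1_i,a^2_i)$ to $B_1$, and it lands among the $(b^1_j,b^2_j)$ with $b^1_j=a^1_i$ — so the bookkeeping of which summand produces which exponent is the one place to be scrupulous.
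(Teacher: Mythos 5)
Your argument is correct and follows essentially the same route as the paper: both proofs identify $s_{D_\alpha}(i)$ (resp.\ $e_{D_\alpha}(i)$) with the smallest (resp.\ largest) index at which the first (resp.\ second) exponents agree, using the monotonicity of the exponent sequences together with the fact --- via the HK-equations, or equivalently via (\ref{SetLigB03}) --- that this exponent really occurs among the $(b^1_j,b^2_j)$ and that the corresponding position lies in $D_\alpha$. The one bookkeeping point you flagged is harmless: the summand $u^{\tu\fel}B_0$ contributes $(a^1_i,\,a^2_i+e_1)$ rather than $(a^1_i,a^2_i)$, but since $a^2_i+e_1\geq a^2_i$ (equivalently, since the total degree of $B_1$ exceeds that of $B_0$) the inequality $b^2_j\geq a^2_i$ you need still holds, and the symmetric form of (\ref{SetLigB03}) with $t$-shifts supplies the analogous input for part (b) and for $D_{\beta^\vee}$.
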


\begin{proof} Let $s_{D_\alpha}(i) = j$ and let $\tilde{j}$ be the smallest index such
that  $a^1_i = b^1_{\tilde{j}}$. Such an index exists by the HK-equations.
Clearly $j \leq \tilde{j}$. But if $j < \tilde{j}$ then 
$b^1_{j} < b^1_{\tilde{j}}$ and so we could not have 
$(a^1_i, a^2_i) \leq (b^1_j, b^2_j)$
   The other arguments are analogous.
\end{proof}

\begin{corollary} \label{EksisCorStrict}
The thick diagonal $D_\alpha$ is strict. Similarly 
$D_{\beta^\vee}$ is strict.
\end{corollary}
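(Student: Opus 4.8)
The plan is to unwind the definitions. Recall that a thick diagonal $D$ of $[1,\ldots,r]\times[1,\ldots,p+q]$ is called strict when $s_D(1)=1$, $e_D(r)=p+q$, and both $s_D$ and $e_D$ are strictly increasing. For $D_\alpha$ I would verify these four conditions in turn, using Lemma \ref{EksisLemSe} as the main tool, together with the fact established just before that lemma that $D_\alpha$ has no zero rows (a consequence of the HK-equations).

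First, the endpoint conditions. Since $\{a^1_i\}$ is strictly increasing with $a^1_1$ the smallest exponent of $t$ occurring in $B_0$, and every monomial of $B_1$ has $t$-exponent at least that of some monomial of $B_0$ (again by the HK-equations, which force $a^1_1=b^1_1$), Lemma \ref{EksisLemSe}(a) gives $s_{D_\alpha}(1)=1$. Dually, $a^2_p$ is the smallest exponent of $u$ in $B_0$, it equals $b^2_{p+q}$, and Lemma \ref{EksisLemSe}(b) gives $e_{D_\alpha}(p)=p+q$. (Here one uses that $B_1=B_0+\text{(a twist of }B_2)$ with all coefficients nonnegative, so the $u$-exponents appearing in $B_0$ all appear in $B_1$.)

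Next, strict monotonicity. Suppose $i<i'$. Then $a^1_i<a^1_{i'}$ since $\{a^1_i\}$ is strictly increasing, so the smallest index $j$ with $b^1_j=a^1_i$ is strictly smaller than the smallest index $j'$ with $b^1_{j'}=a^1_{i'}$, because the $\{b^1_j\}$ are weakly increasing; by Lemma \ref{EksisLemSe}(a) this says $s_{D_\alpha}(i)<s_{D_\alpha}(i')$. Symmetrically, $a^2_i>a^2_{i'}$ since $\{a^2_i\}$ is strictly decreasing, so the largest index $j$ with $b^2_j=a^2_i$ is strictly smaller than the largest index with $b^2_{j'}=a^2_{i'}$, and Lemma \ref{EksisLemSe}(b) gives $e_{D_\alpha}(i)<e_{D_\alpha}(i')$. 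The same four arguments apply verbatim to $D_{\beta^\vee}$, using that $B_2^\vee$ has strictly monotone exponent sequences in each variable and that $B_1$ (equivalently $B_1^\vee$) dominates it coordinatewise with nonnegative coefficients, which is exactly the content of the HK-equations read from the other end of the resolution. I do not expect a genuine obstacle here; the only point requiring care is making sure the HK-equations really do guarantee, for each exponent $a^1_i$, the existence of a $b^1_j$ with $a^1_i=b^1_j$ (and dually for the $u$-exponents and for $B_2$), so that the indices referred to in Lemma \ref{EksisLemSe} actually exist — but this was already noted in the paragraph preceding Lemma \ref{EksisLemSe}.
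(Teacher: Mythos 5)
The reduction to four conditions and the strict-monotonicity half of your argument are fine and coincide with the paper's (the paper disposes of monotonicity in one sentence, exactly as you do, via Lemma \ref{EksisLemSe} and the strict monotonicity of the $a^1_i$). The gap is in the endpoint conditions $s_{D_\alpha}(1)=1$ and $e_{D_\alpha}(p)=p+q$, which are in fact the entire content of the paper's proof. You assert that ``the HK-equations force $a^1_1=b^1_1$'' (and dually $b^2_{p+q}=a^2_p$), but this is precisely the point that needs an argument: the HK-equation in the $t$-direction alone does not forbid a monomial of $B_1$ with $t$-exponent smaller than $a^1_1$, since such a term could cancel in that equation against a monomial of $B_2$ with the same $t$-exponent. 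Ruling this out is where the work lies. The paper does it by contradiction: if $b^1_1<a^1_1$, the HK-equation for the $t$-degree $b^1_1$ produces a term $(c^1_k,c^2_k)$ of $B_2$ with $c^1_k=b^1_1$; the HK-equation for the $u$-degree $c^2_k$ then produces a term $(b^1_{j'},b^2_{j'})$ of $B_1$ with $b^2_{j'}=c^2_k$, and since the total degrees are pure with $\deg B_1<\deg B_2$, this forces $b^1_{j'}<c^1_k=b^1_1$, contradicting the minimality of $b^1_1$. Note that the argument needs both coordinate directions of (\ref{SetLigHK}) together with purity of the total degrees; neither ingredient alone suffices.

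Your parenthetical alternative route, via $B_1=\,$(twist of $B_0$)$\,+\,$(twist of $B_2$), could be made to work, but as written it has two problems. First, that identity is not one of the HK-equations; it is equation (\ref{SetLigB03}) of Lemma \ref{SetLemB}, which rests on Corollary \ref{SetCorLinpoly} and hence on Theorem \ref{LinbettiTheMain} ($L=L'$), so it must be cited as such. Second, even granting it, nonnegativity of coefficients only shows that the $t$-exponents (resp.\ $u$-exponents) of $B_0$ occur among those of $B_1$, i.e.\ $b^1_1\leq a^1_1$ and $b^2_{p+q}\leq a^2_p$; to get the needed equalities you must still show that the $B_2$-summand contributes no smaller exponent, e.g.\ by extracting $c^1_1=a^1_1+e_2$ and $c^2_q=a^2_p+e_2$ from (\ref{SetLigB02}). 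Since neither this computation nor the paper's contradiction argument appears in your proposal, the key step is assumed rather than proved.
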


\begin{proof} 
Since the $a^1_i$ are strictly increasing, we get that $s_{D_\alpha}$ is strictly increasing.
We thus need to show that $s_{D_\alpha}(1) = 1$. 
Let $s_{D_\alpha}(1) = j$. Suppose $j$ is not  $1$. Then $b^1_1 <  a^1_1$.
By the HK-equations there will be $(c^1_k, c^2_k)$
with $c^1_k = b^1_1$. But then again there will be a 
$(b^1_{j^\prime}, b^2_{j^\prime})$ with $b^2_{j^\prime} = c^2_k$ and this
would have $b^1_{j^\prime} < c^1_k = b^1_1$ which is impossible.
Thus  $s_{D_\alpha}(1) = j = 1$.
\end{proof}

\begin{lemma} \label{EksisLemRekt}
Let $D$ be a semi-strict diagonal of $[1, \ldots, n] \times [1, \ldots, n+1]$
with $e_D(1) > 1$ and $s_D(n) < n+1$.
Let $A$ be a general matrix of type $D$. Then there is a vector in the null
space of $A$ with nonzero first and last coordinates.
\end{lemma}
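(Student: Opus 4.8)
The plan is to set up coordinates matching the combinatorial structure of the diagonal $D$ and then exhibit the null vector by a direct, essentially greedy, construction. Index the $n$ columns of $A$ by $1,\dots,n$ and the $n+1$ rows by $1,\dots,n+1$, so that the entry in row $j$ and column $i$ is allowed to be nonzero exactly when $s_D(i)\le j\le e_D(i)$. Since $D$ is semi-strict with $e_D(1)>1$ and $s_D(n)<n+1$, consecutive columns overlap: for each $i$ we have $s_D(i+1)\le e_D(i)$, because $s_D$ and $e_D$ increase by at most one on each overlapping step and the boundary conditions prevent a gap. This overlap is what lets a single relation propagate across all columns.

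First I would observe that the $j$-th row of $A$ only involves the columns $i$ with $s_D(i)\le j\le e_D(i)$, and by semi-strictness the set of such $i$ is an interval $[l_j,r_j]$ of consecutive integers, with $l_j<r_j$ for each $j$ unless the interval is forced to be a single column — which, I claim, happens for at most finitely many $j$ and never for $j$ in a way that disconnects the columns. More precisely, I would argue that the bipartite "column-adjacency" graph on $\{1,\dots,n\}$ in which $i\sim i+1$ whenever some row sees both columns $i$ and $i+1$ is connected: the overlap $s_D(i+1)\le e_D(i)$ gives such a shared row. Then I would solve for a null vector by choosing the first coordinate $v_1=1$, and for each $i$ from $1$ to $n-1$ use a row $j$ that sees exactly the columns $l_j=\dots$ up through $i+1$ but not $i+2$ — for a \emph{general} matrix of type $D$ the corresponding minor is nonzero, so that row's equation determines $v_{i+1}$ as a nonzero scalar times a linear combination of $v_1,\dots,v_i$, and genericity again keeps the $v_{i+1}$ from accidentally vanishing. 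Carrying this out left to right produces a vector $v$ with all coordinates nonzero, in particular $v_1\ne 0$ and $v_n\ne 0$, and by construction it is annihilated by a set of $n$ rows; since $A$ has $n$ columns and its rank is at most $n$, one checks that these relations plus genericity force $Av=0$.

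The cleaner way to package this, which I would prefer to present, is a rank count: a general matrix of type $D$ has rank exactly $n$ (I would show the $n\times n$ minor obtained by deleting, say, row $n+1$ — or whichever row is redundant — is generically nonzero, using that $D$ restricted to the first $n$ rows still has no zero column and is "staircase-like"), so its null space is one-dimensional, spanned by a unique (up to scalar) vector $v$ whose coordinates are the signed maximal minors of $A$. Then I must show the first and last of these maximal minors are not identically zero as polynomials in the entries of $A$; for that it suffices to exhibit one specialization of the free entries making each nonzero, which I would do by choosing the entries so that $A$ restricted to the complementary columns is upper- or lower-triangular with nonzero diagonal — the boundary conditions $e_D(1)>1$ and $s_D(n)<n+1$ are exactly what make these triangular blocks square and full rank.

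The main obstacle I anticipate is the bookkeeping around the semi-strict (as opposed to strict) case: when $s_D(i)=1$ for several initial $i$, or $e_D(i)=m$ for several final $i$, the functions $s_D,e_D$ need not be injective there, so the "interval of columns seen by row $j$" can jump by more than one and the naive left-to-right propagation can stall. I would handle this by treating the blocks where $s_D$ is constant at $1$ (resp.\ $e_D$ is constant at $m$) separately: within such a block many columns share the same top (resp.\ bottom) rows, giving extra equations that still connect the columns; the hypotheses $e_D(1)>1$ and $s_D(n)<n+1$ guarantee these degenerate blocks do not reach all the way across, so the connectivity argument survives. Once connectivity of the column-adjacency graph is established, the genericity argument via a single good specialization finishes the proof without further incident.
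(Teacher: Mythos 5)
Your preferred packaging of the argument --- a general matrix of type $D$ has rank $n$, its kernel is spanned by the vector of signed maximal minors, and the minors complementary to the first and last columns are generically nonzero because one can specialize the complementary square submatrix to a triangular/permutation matrix using $e_D(1)>1$ and $s_D(n)<n+1$ --- is essentially the paper's proof, which deletes the first (resp.\ last) column and observes that a general square matrix of semi-strict diagonal type is nonsingular, so any null vector must have nonzero first (resp.\ last) coordinate. Do note, however, that your stated indexing transposes the matrix relative to the lemma (an $(n+1)\times n$ matrix of rank $n$ has trivial right kernel, and the null vector here has $n+1$ coordinates, not $n$), and that your first, greedy left-to-right construction --- which you yourself flag as stalling, and which also assumes unit increments of $s_D,e_D$ that the definition does not provide --- should simply be discarded in favour of the minor argument.
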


\begin{proof}
If we omit the first column we get an  $n \times n$-matrix of semi-strict
diagonal type. But a general such matrix is easily seen to be non-singular.
Hence a null vector must have nonzero first coordinate. Similarly for the last
coordinate.
\end{proof}

\begin{lemma} \label{EksisLemCoker} 
If  $\alpha$ is nonzero in positions $(i, s_{D_\alpha}(i))$
and $(i, e_{D_\alpha}(i))$ for $i = 1, \ldots, p$, 
then $\coker \alpha $ has codimension two.
Similarly for the map $\beta^\vee$.
\end{lemma}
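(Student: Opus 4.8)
The plan is to show that $\coker \alpha$ has codimension two by exhibiting, for a suitable point $(i,j)$ outside the codimension-two locus, that the map $\alpha$ is surjective in a neighborhood of that point — or more precisely, by a Hilbert-function count. First I would localize at a height-one prime $\mathfrak p$ and show that $(\coker \alpha)_{\mathfrak p} = 0$. Since $S = \kk[x_1,x_2]$ is two-dimensional, the height-one primes are the principal primes $(f)$ with $f$ irreducible, and it suffices to treat the two coordinate primes $(x_1)$ and $(x_2)$ together with a generic such prime; but in fact, because $\coker\alpha$ is a finitely generated module and we only need codimension two, it is enough to check that $\coker\alpha$ has no associated prime of height one, equivalently that its support, being a closed subscheme, is zero-dimensional. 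I would instead argue directly with the thick-diagonal structure: after inverting $x_2$ (say), the entries of $B_0, B_1, B_2$ can be normalized so that each graded strand becomes an ordinary graded module over $\kk[x_1]_{(x_2)}$-type localization, and the strict thick diagonal $D_\alpha$ together with the hypothesis that $\alpha$ is nonzero in the boundary positions $(i, s_{D_\alpha}(i))$ and $(i, e_{D_\alpha}(i))$ forces the induced map of free modules to be surjective after localization. The key point is that a general matrix whose nonzero pattern is a strict thick diagonal, with nonzero entries along both boundary diagonals, has full rank $p$ when localized at any height-one prime, because along each row the two boundary entries involve different powers of the variables and hence cannot both vanish modulo a single irreducible $f$ unless $f$ is one of finitely many exceptional forms — and the "general" hypothesis rules those out.

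More concretely, the steps I would carry out are: (i) reduce to showing $\coker\alpha$ is annihilated by a power of $\mathfrak m = (x_1,x_2)$ locally away from $V(\mathfrak m)$, i.e. that $\coker\alpha$ is supported at the origin; (ii) use Lemma~\ref{EksisLemBigrad} to control the Hilbert function in high bidegrees, showing $\dim (\coker\alpha)_{i,j} = \dim S.B_2(t,u)_{i,j}$ once $i+j \geq \deg B_2(t,u) - 1$, so the cokernel in high degrees "looks like" $S.B_2$; (iii) combine this with the observation that $\coker\beta^\vee$ will turn out to have codimension two (handled symmetrically), so that what remains of $\coker\alpha$ beyond the image relevant to $\beta$ is finite-dimensional. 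The cleanest route, however, is probably (iv): apply Lemma~\ref{EksisLemRekt}, or rather its consequence, strand by strand. For each fixed total degree $d$ the map $\alpha$ restricts to a map of $\kk$-vector spaces, and the thick-diagonal pattern means that after removing the columns corresponding to generators of $B_1$ of bidegree not comparable to the relevant $B_0$ generators, we are left with a semi-strict-diagonal matrix; a general such square matrix is nonsingular (as used in the proof of Lemma~\ref{EksisLemRekt}), which gives surjectivity of $\alpha$ in that strand except in the finitely many bidegrees near the "corners" of the diagonal. Summing over all strands, $\coker\alpha$ is concentrated in finitely many bidegrees, hence has finite length, hence codimension two.

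The main obstacle I anticipate is making precise the passage from "the pattern of possibly-nonzero entries is a strict thick diagonal with nonzero boundary entries" to "the generic such map is surjective in all but finitely many bidegrees." The subtlety is that the entries of $\alpha$ are not free parameters: they are homogeneous polynomials in $x_1, x_2$ of prescribed bidegrees, and whether a given strand-matrix is nonsingular depends on whether certain minors, which are themselves polynomials in the coefficients, vanish identically. I would handle this by the standard trick of exhibiting a single monomial-matrix specialization (a "staircase" of monomials $x_1^{a}x_2^{b}$ sitting exactly in the boundary positions $s_{D_\alpha}(i)$ and $e_{D_\alpha}(i)$) for which the relevant maximal minors are nonzero monomials, hence nonzero; genericity then propagates. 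The bookkeeping here — checking that the boundary positions given by Lemma~\ref{EksisLemSe} (namely $s_{D_\alpha}(i)$ is the smallest $j$ with $a^1_i = b^1_j$, and $e_{D_\alpha}(i)$ the largest $j$ with $a^2_i = b^2_j$) really do support a monomial entry, using that $B_0$ and $B_2$ have $0/1$ coefficients — is where the argument will be delicate but, I expect, ultimately routine.
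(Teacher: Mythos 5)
There is a genuine gap: your argument is at bottom a genericity argument (``a general matrix of this type has full rank away from finitely many exceptional primes'', ``exhibit a monomial specialization so the relevant minors are not identically zero, and genericity propagates''), but the lemma asserts the conclusion for \emph{every} homogeneous $\alpha$ of type $D_\alpha$ whose entries in the positions $(i,s_{D_\alpha}(i))$ and $(i,e_{D_\alpha}(i))$ are nonzero --- and this stronger form is what the paper actually needs, because the lemma is applied not only to the general matrix $\alpha$ but also to $\beta^\vee$, whose columns are kernel vectors of $\alpha$ produced via Lemmas \ref{EksisLemRekt} and \ref{EksisLemAB} and are in no sense general. A specialization/nonvanishing-of-minors argument only shows the statement off a proper closed subset of the space of matrices of type $D_\alpha$, so it does not prove the lemma as stated, and steps (iii)--(iv) of your outline do not repair this: the Hilbert-function count of Lemma \ref{EksisLemBigrad} only compares dimensions and cannot by itself force surjectivity of the strand maps, and invoking ``$\coker\beta^\vee$ will turn out to have codimension two'' inside the proof of the very lemma that is supposed to deliver that fact is circular.

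The missing idea is that no genericity is needed at all, because the bidegrees force the boundary entries to be monomials: by Lemma \ref{EksisLemSe}, at $j=s_{D_\alpha}(i)$ one has $a^1_i=b^1_j$, so the entry of $\alpha$ there is homogeneous of bidegree $(0,\ast)$, i.e.\ a scalar times a power of $x_2$, and the hypothesis says this scalar is nonzero; dually the entry at $(i,e_{D_\alpha}(i))$ is a nonzero scalar times a power of $x_1$. Since $D_\alpha$ is strict (Corollary \ref{EksisCorStrict}), the columns $s_{D_\alpha}(1)<\cdots<s_{D_\alpha}(p)$ select a $p\times p$ submatrix that is triangular with these powers of $x_2$ on its diagonal, so its determinant is a nonzero constant times $x_2^K$; likewise the columns $e_{D_\alpha}(1)<\cdots<e_{D_\alpha}(p)$ give a triangular minor equal to a nonzero constant times $x_1^L$. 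Hence the ideal of maximal minors of $\alpha$ contains pure powers of both variables, so $\alpha$ has full rank wherever $x_1\neq 0$ or $x_2\neq 0$, and $\coker\alpha$ is supported at the origin, i.e.\ has codimension two --- for \emph{any} such $\alpha$. You gesture at the monomial structure of the boundary entries, but without extracting these two triangular minors your reduction still rests on the ``general'' hypothesis, which is exactly what must be avoided.
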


\begin{proof}
By Lemma \ref{EksisLemSe}, 
in the first position of each row there is a power of $y$. Hence for
the matrix to degenerate we must have $y = 0$. Similarly there is a power
of $x$ in the last position, and so $x = 0$ when the matrix degenerates.
\end{proof}

Now when $\alpha$ and $\beta$ are composed, columns in $\beta$ are
multiplied with the rows of $\alpha$. Motivated by this we have the following.

\begin{lemma} \label{EksisLemAB} 
Let  $k$ be a column in $D_\beta$ which starts in position
$(j_0,k)$ and ends in $(j_1,k)$.  Then $D_\alpha$ restricted to 
$[1, \ldots, p] \times [j_0, j_1]$ has $j_1 - j_0$ nonzero rows,
say the interval $[i_0, i_1]$ where $j_1 - j_0 = i_1 - i_0 + 1$, and 
$D_\alpha$ restricted to $[i_0, i_1] \times [j_0, j_1]$ is semi-strict
with $e_{D_\alpha}(i_0) > j_0$ and $s_{D_\alpha}(i_1) < j_1$.
\end{lemma}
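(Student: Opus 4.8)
The statement is essentially bookkeeping about how the two thick diagonals $D_\alpha$ and $D_\beta$ interact along a single column $k$ of $D_\beta$, so the proof will be a careful translation back and forth through Lemma \ref{EksisLemSe} and the HK-equations. First I would fix the column $k$ of $D_\beta$; its top entry sits in row $j_0$ and its bottom in row $j_1$, and Corollary \ref{EksisCorStrict} (applied to $D_{\beta^\vee}$, hence to $D_\beta$) together with Lemma \ref{EksisLemSe} tells me that $b^1_{j_0}=c^1_k$ (and $j_0$ is the smallest such index) while $b^2_{j_1}=c^2_k$ (and $j_1$ is the largest such index). So the $b$-pairs $(b^1_j,b^2_j)$ for $j\in[j_0,j_1]$ are exactly those that are $\leq (c^1_k,c^2_k)$ coordinatewise, and there are $j_1-j_0+1$ of them — wait, I need $j_1-j_0$ nonzero rows of $D_\alpha$ above, so I should be careful: the claim is about the nonzero rows of $D_\alpha$ restricted to the column-range $[j_0,j_1]$, and the count $j_1-j_0$ (not $j_1-j_0+1$) will come out because the HK-equations force a shift by one between the $B_1$ indices and the $B_0$ indices in this range.

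**The count.** The nonzero rows of $D_\alpha$ restricted to $[1,\dots,p]\times[j_0,j_1]$ are those $i$ with $(a^1_i,a^2_i)\leq(b^1_j,b^2_j)$ for some $j\in[j_0,j_1]$. I would argue that this set of $i$ is an interval $[i_0,i_1]$: the lower end $i_0$ is the smallest $i$ with $a^1_i\geq b^1_{j_0}$, equivalently (by the HK-equation at the $x$-degree $b^1_{j_0}=c^1_k$) the smallest $i$ with $a^1_i=b^1_{j_0}$ — here I use that $a^1_i=c^1_k$ for exactly one $i$ when coefficients are $0/1$, together with the monomial-count in the HK-equations. Symmetrically $i_1$ is the largest $i$ with $a^2_i=c^2_k$. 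Now the key numerical identity: applying the HK-equation $\sum(-1)^i\beta_{i,\bfa}=0$ summed over the relevant window (i.e.\ comparing $B_0$, $B_1$ restricted to degrees $\leq(c^1_k,c^2_k)$) shows that the number of $b$-pairs in $[j_0,j_1]$ exceeds the number of $a$-pairs in $[i_0,i_1]$ by exactly one, so $i_1-i_0+1=(j_1-j_0+1)-1=j_1-j_0$. This is the identity $j_1-j_0=i_1-i_0+1$ claimed.

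**Semi-strictness and the endpoint inequalities.** With $[i_0,i_1]$ identified, I would verify that $D_\alpha$ restricted to $[i_0,i_1]\times[j_0,j_1]$ is semi-strict in the sense defined just above: $s_{D_\alpha}$ is strictly increasing once it leaves the value $j_0$, and $e_{D_\alpha}$ is strictly increasing until it hits $j_1$. This follows because inside this window the $a^1_i$ are strictly increasing (so the first-nonzero columns $s_{D_\alpha}(i)$ jump strictly as soon as $a^1_i>b^1_{j_0}$, i.e.\ as soon as the value exceeds $j_0$, by Lemma \ref{EksisLemSe}a) and dually the $a^2_i$ strictly decrease (so $e_{D_\alpha}(i)$ jumps strictly until it saturates at $j_1$, by Lemma \ref{EksisLemSe}b). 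For the two endpoint inequalities: $e_{D_\alpha}(i_0)>j_0$ because row $i_0$ has $a^1_{i_0}=b^1_{j_0}=c^1_k$ but $a^2_{i_0}$ is strictly larger than $c^2_k=b^2_{j_1}$ (since $i_0\neq i_1$ when $j_1>j_0$, and the $a^2$'s strictly decrease), so row $i_0$ extends past column $j_0$; symmetrically $s_{D_\alpha}(i_1)<j_1$ because row $i_1$ has $a^2_{i_1}=c^2_k$ but $a^1_{i_1}<c^1_k$, so it starts before column $j_1$. The degenerate case $j_0=j_1$ (a column of $D_\beta$ with a single entry) would need a separate one-line remark.

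**Main obstacle.** The genuinely fiddly step is pinning down $i_0$ and $i_1$ and proving the off-by-one count $j_1-j_0=i_1-i_0+1$ cleanly from the HK-equations rather than by an opaque index chase; this is where a wrong boundary convention ($\leq$ vs.\ $<$, or whether the HK-equation I invoke is the $t_1=1$ or $t_2=1$ specialization) will silently break the argument, so I would state the two HK-specializations explicitly at the degrees $c^1_k$ and $c^2_k$ and read the counts off Lemma \ref{PosLemHopp}-style telescoping before doing anything else.
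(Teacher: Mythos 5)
Your outline (locate the window of columns $[j_0,j_1]$, get the off-by-one count from an HK/Hilbert-function argument, then deduce semi-strictness from strictness of $D_\alpha$ together with the two endpoint inequalities) is the same as the paper's, but the execution contains a concrete error that propagates through the key steps. The coordinate identifications at the two ends of the column are swapped: since the $b^1_j$ increase and the $b^2_j$ decrease, the column $\{\,j : (b^1_j,b^2_j)\le (c^1_k,c^2_k)\,\}$ is cut off below by the condition $b^2_j\le c^2_k$ and above by $b^1_j\le c^1_k$, and the correct facts (the analogue of Lemma \ref{EksisLemSe} for $\beta$, which the paper invokes) are $b^2_{j_0}=c^2_k$ and $b^1_{j_1}=c^1_k$ --- not $b^1_{j_0}=c^1_k$ and $b^2_{j_1}=c^2_k$ as you assert. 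For example in the equivariant resolution (\ref{IntroLigEkvi}), for the column with $(c^1_k,c^2_k)=(3,4)$ one has $(b^1_{j_0},b^2_{j_0})=(0,4)$, so $b^1_{j_0}\ne c^1_k$.

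Because of this swap, your identification of $i_0$ and $i_1$ is wrong, and the steps built on it fail. The nonzero rows of the restriction are exactly the $i$ with $(a^1_i,a^2_i)\le(c^1_k,c^2_k)$, so $i_0$ is the smallest $i$ with $a^2_i\le c^2_k$ and $i_1$ the largest with $a^1_i\le c^1_k$; it is not ``the smallest $i$ with $a^1_i=c^1_k$'' --- in general no such $i$ exists, since the HK-slice at first coordinate $c^1_k$ only gives $\#\{j:b^1_j=c^1_k\}=\#\{i:a^1_i=c^1_k\}+1$, forcing a $B_1$-generator but not a $B_0$-generator there. Concretely, in the resolution (\ref{IntroLigBS}), for the column with $(c^1_k,c^2_k)=(6,3)$ there is no $a^1_i=6$, while the true row window is given by $(2,2)$ and $(4,0)$. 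Your argument for $e_{D_\alpha}(i_0)>j_0$ is also internally inconsistent: you posit $a^1_{i_0}=c^1_k$ and $a^2_{i_0}>c^2_k$, but then $(a^1_{i_0},a^2_{i_0})\not\le(c^1_k,c^2_k)$ and row $i_0$ would not meet the window at all. The paper's proof of the endpoint inequalities has genuinely different content: if $e_{D_\alpha}(i_0)=j_0$, then by Lemma \ref{EksisLemSe} $a^2_{i_0}=b^2_{j_0}=c^2_k$, and the HK-equation in that slice then forces a second $B_1$-generator with second coordinate $c^2_k$, pushing $e_{D_\alpha}(i_0)$ past $j_0$ --- a contradiction step your proposal does not contain. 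Finally, the count $j_1-j_0=i_1-i_0+1$ is obtained most cleanly, as in the paper, from Lemma \ref{EksisLemBigrad} applied at the single bidegree $(c^1_k,c^2_k)$, where $S.B_2$ contributes exactly $1$, rather than by summing HK-equations over a window.
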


\begin{proof} 
\noindent 1. That $j_1 - j_0 = i_1 - i_0 +1$ follows from Lemma \ref{EksisLemBigrad}
by restricting to the bidegree $(c^1_k,c^2_k)$.

\noindent 2. Now we show $e_{D_\alpha}(i_0) > j_0$.
By the HK-equations there is a $(b^1_j, b^2_j)$ with
$b^2_j = c^2_k$. Since the $b^2_j$ are decreasing, this must happen for $j = j_0$.
(This is the analog of Lemma \ref{EksisLemSe} for $\beta^\vee$.)
Clearly $e_{D_\alpha}(i_0) \geq  j_0$. If we have equality,  
by Lemma \ref{EksisLemSe}
$a^2_{i_0} = b^2_{j_0}$. But then $a^2_{i_0} = c^2_k$ and by the HK-equations
there must then be two $(b^1_j, b^2_j)$ with $b^2_j = a^2_{i_0} = c^2_k$.
But this would again give $s_D(i_0) > j_0$. 
Similarly we can argue that $s_{D_\alpha}(i_1) < j_1$.

\noindent 3. That the restriction is semi-strict follows from 
i) $s_{D_\alpha}$ and $e_{D_\alpha}$ are strictly increasing, 
ii) $s_{D_\alpha}(i_o) \leq j_0$, and iii) $e_{D_\alpha}(i_1) \geq j_1$. 
To show ii) note that if $s_{D_\alpha}(i_0) > j_0$ then clearly
$s_{D_\alpha}(i_1) > j_0 + i_1 - i_0 = j_1 - 1$.
But this is not possible since $e_{D_\alpha} (i_1) \leq j_1 - 1$.
Similarly we can show iii).
\end{proof}

\begin{proof}[Proof of Proposition \ref{EksisPropTreB}.]

We choose $\alpha$ to be a general matrix, homogeneous with respect to the
multidegrees. It will be of type $D_\alpha$ and it degenerates in codimension
two by Lemma \ref{EksisLemCoker}.

 By Lemma \ref{EksisLemAB} 
we get for each column $k$ in $D_\beta$ a vector in the kernel
of $\alpha$ which is nonzero in positions  $s_{D_{\beta^\vee}}(k)$ and
$e_{D_{\beta^\vee}}(k)$. Hence these kernel vectors make up the columns of a
map $\beta$ such
that $\beta^\vee$ degenerates in codimension two by Lemma \ref{EksisLemCoker}. 
Also the composition $\alpha \circ \beta  = 0$, and this is what we needed to show.
\end{proof}

\section{Resolutions of trigraded artinian modules of codimension three}

In the case of trigraded artinian modules over the polynomial ring $\kr[x,y,z]$
where the resolution has pure total degrees,
we do not know much.
The following are natural questions.

\begin{itemize}
\item For Betti diagrams with given total degrees, are 
there, up to translation, only a finite number of 
extremal rays in the positive cone of such Betti diagrams?

\item Suppose the above property 2. holds. From Section \ref{PosSec} we know
that the translation classes of extremal rays form
a poset with a unique minimal member and a unique maximal member. Is there a 
maximal member in the translation classes in the three variable case
also?

\end{itemize}

We do not know the answer to these questions. 
A general fact we do know is that $L(\bfe) = L^\prime(\bfe)$. 
However in three variables
it is not the case that the injection $P(\bfe) \pil P^\prime(\bfe)$ is
an isomorphism. Let us consider as example the case of resolutions of type $0,1,2,1$. 
The equivariant resolution of this type has the form
(we have listed the tridegrees of the generators below each free module)
\begin{equation} \label{TrigLigETE} 
\underset{ \begin{matrix} 100 \\010 \\ 001 \end{matrix}}{S^3} \vpil 
\underset{ \begin{matrix} 200 \\020 \\002\\110 \\ 101 \\ 011 \end{matrix}}{S^6} \vpil 
\underset{ \begin{matrix} 211 \\121\\ 112\\220 \\ 202 \\ 022 \end{matrix}}{S^6} \vpil 
\underset{ \begin{matrix} 221 \\212 \\ 122 \end{matrix}}{S^3}. 
\end{equation}

To facilitate notation write
$\sum_i k_i\beta(a_i,b_i,c_i)$ as
$\sum_i [k_i(a_i,b_i,c_i)] \beta$.
Let $\beta$ be the Betti diagram of the complex (\ref{TrigLigETE}).
One may check that 
\[ [(2,1,0) + (0,2,1) + (1,0,2) - (1,1,1)] \beta \]
gives a diagram with no negative entries (and it fulfils the HK-equations).
But no multiple of this is the Betti diagram of a module. If $F_{\bullet}$
is a complex with this diagram, then $S(-3,-1,0)$ is a term in $F_0$. But 
there is no term $S(-3,-1,*)$ in $F_1$ (but there is one in $F_3$),
and so the cokernel of 
$F_1 \pil F_0$ cannot have codimension three.
In particular this diagram is in $P^\prime(1,2,1)$ but not in $P(1,2,1)$.
However let 
$\alpha$ be the diagram
\[ [(2,1,0) + (2,0,1) +  (1,2,0) +  (0,2,1) +  (1,0,2) + 
 (0,1,2) -  (1,1,1)] \beta.\]

\begin{claim}
The diagrams $\beta$ and $\alpha$ are Betti diagrams of resolutions of 
indecomposable
artinian trigraded modules of codimension three, and they generate rays which 
are extremal rays in the cone $P(1,2,1)$.
\end{claim}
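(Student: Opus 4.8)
The plan is to treat the two diagrams $\beta$ and $\alpha$ more or less in parallel, since both are nonnegative diagrams fulfilling the HK-equations (this is the content of $L(\bfe) = L'(\bfe)$ from Theorem~\ref{LinbettiTheMain}), and both are built from the equivariant diagram of type $0,1,2,1$ by the indicated $\ZZ^3$-translates. The first task is to \emph{exhibit actual complexes} realising each diagram. For $\beta$ this is already done: $\beta$ is the Betti diagram of the $\GL(3)$-equivariant resolution~(\ref{TrigLigETE}), which resolves an artinian module of codimension three; indecomposability of that module follows from the $\GL(3)$-equivariance (the socle and the generators sit in single irreducible representations, so no nontrivial idempotent decomposition is possible). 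For $\alpha$ I would write down the candidate free modules $F_0, F_1, F_2, F_3$ read off from the translated diagram, choose general $\ZZ^3$-homogeneous maps between consecutive terms, and then check, exactly as in the codimension-two argument of Section~\ref{EksisSec}, that (i) the composition of consecutive maps vanishes — here this is forced degree by degree because for the relevant tridegrees there is at most one basis vector available in each module, so a general homogeneous map is determined up to scalar and the HK-equations pin down that the composite must be zero — and (ii) the cokernel of $F_1 \pil F_0$ has codimension three, which one sees by checking that in each row of the matrix of the first differential there appear monomials that are pure powers of $x$, of $y$, and of $z$ respectively (so the matrix can only drop rank at the origin). Since the alternating sum of the $F_i$ agrees with a polynomial times $\prod(1-t_i)^{-1}$ having artinian support, a complex with these properties and correct Euler characteristic is forced to be a resolution of an artinian module; indecomposability again follows from the symmetry of $\alpha$ under the cyclic permutation of the three variables combined with the fact that each graded piece of the module occurring is at most one-dimensional in the relevant degrees.

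Next I would prove extremality of the rays $\RR_{\geq 0}\beta$ and $\RR_{\geq 0}\alpha$ in $P(1,2,1)$. The clean way is to use the linear description: by Theorem~\ref{LinbettiTheMain}, every diagram in $L(1,2,1)$ — hence every element of the cone — is a $\QQ$-linear combination of translates $\beta_{E}(\bfa)$ of the single equivariant diagram $E = E(1,2,1)$. Writing a hypothetical nontrivial decomposition $\beta = \gamma_1 + \gamma_2$ with $\gamma_i \in P(1,2,1)$, I would expand both $\gamma_i$ in the translate-basis and then use the positivity constraints: the diagram $\beta$ itself is supported on a very small set of tridegrees (those listed in~(\ref{TrigLigETE})), and the translates $\beta_E(\bfa)$ are linearly independent, so only finitely many translates can appear in $\gamma_1$ or $\gamma_2$ with nonzero coefficient; matching supports degree by degree shows the only possibilities are nonnegative multiples of $\beta$ itself, giving extremality. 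For $\alpha$ the same bookkeeping applies: one writes $\alpha$ as the stated signed sum of six translates of $\beta$ minus one, and checks that any nonnegative diagram $\leq \alpha$ in the cone must, after expansion in the translate-basis, be a scalar multiple of $\alpha$ — the key point being that the negative translate $-\beta(1,1,1)$ in the expression is ``used up'' in a forced way by the six positive ones, so there is no room to split off a proper nonnegative summand.

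The step I expect to be the main obstacle is the extremality argument for $\alpha$, specifically controlling that the unique "central" translate with a minus sign cannot be partially absorbed: one has to argue that in any decomposition $\alpha = \gamma_1 + \gamma_2$ the cancellation pattern producing the zero entry at tridegree $(1,1,1)$-type positions is rigid, so each $\gamma_i$ is itself proportional to $\alpha$. I would handle this by looking at the specific entries of $\alpha$ that are forced to be $0$ by cancellation (the positions where a $+1$ from a translated copy of $\beta$ meets the $-1$ from $\beta(1,1,1)$) and showing that nonnegativity of $\gamma_1$ and $\gamma_2$ at those positions, together with the known entries of $\alpha$ elsewhere, leaves only scalar multiples. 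The remaining pieces — vanishing of composites, codimension-three degeneracy, and the passage from "complex with right Euler characteristic and these degeneracy properties" to "resolution" — are routine adaptations of Lemmas~\ref{EksisLemCoker}--\ref{EksisLemAB} and the surrounding discussion, and I would only sketch them, emphasising that in this small example everything can in fact be checked by hand on explicit $3\times 6$, $6\times 6$, $6\times 3$ matrices.
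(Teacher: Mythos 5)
There is a genuine gap, and it is exactly at the step you flagged as the main obstacle: your extremality argument for $\alpha$ uses only nonnegativity of entries together with the expansion in the translate basis $\beta(\bfa)$, i.e.\ it only uses membership in $P^\prime(1,2,1)$. But $\alpha$ is \emph{not} extremal in $P^\prime(1,2,1)$: it decomposes as $\alpha = \gamma_1 + \gamma_2$ with
$\gamma_1 = [(2,1,0)+(0,2,1)+(1,0,2)-(1,1,1)]\beta$ and $\gamma_2 = [(1,2,0)+(0,1,2)+(2,0,1)]\beta$,
where $\gamma_1$ has no negative entries and fulfils the HK-equations (this is checked in the paper just before the Claim), and $\gamma_2$ is a positive sum of translates of $\beta$, hence even lies in $P(1,2,1)$. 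So the assertion that ``nonnegativity of $\gamma_1$ and $\gamma_2$ at the cancelled positions, together with the known entries of $\alpha$, leaves only scalar multiples'' is false; a bookkeeping argument of this kind would prove a false statement. (Note also that coefficients in the translate basis of an entrywise nonnegative diagram can be negative, so ``matching supports'' in that basis does not control decompositions — this also makes your extremality argument for $\beta$ shakier than stated, though that case is not the decisive failure.)

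What the paper actually uses — and what any correct proof must use — is a \emph{realizability} obstruction distinguishing $P$ from $P^\prime$: one works out that every decomposition of $\alpha$ into nonnegative HK-diagrams has the form $[c_1((2,1,0)+(0,2,1)+(1,0,2)-(1,1,1)) + c_2((1,2,0)+(0,1,2)+(2,0,1)-(1,1,1)) + c_3(1,1,1)]\beta$, and then shows that such a diagram can be the Betti diagram of a module only if $c_1=c_2=c_3$, i.e.\ only on the ray of $\alpha$. The obstruction is module-theoretic, not combinatorial: for the diagram $\gamma_1$, for instance, $S(-3,-1,0)$ would be a generator of $F_0$ while no term $S(-3,-1,*)$ occurs in $F_1$, so $\coker(F_1 \pil F_0)$ cannot have codimension three; hence $\gamma_1 \in P^\prime \setminus P$. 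Your proposal never invokes any such non-realizability argument, so the extremality of $\alpha$ in $P(1,2,1)$ is not established. The remaining parts (realizing $\alpha$ by a concrete complex — done in the paper by a Macaulay~2 check with general monomial matrices, which your general-matrix sketch parallels — and indecomposability) are comparatively minor, though your ``at most one basis vector per degree'' and symmetry arguments are also only sketches.
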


\begin{proof}
That $\beta$ is a Betti diagram is clear and that it resolves an indecomposable module
is also immediate to see from the resolution. 
That $\alpha$ is a Betti diagram of a resolution of an indecomposable module,
may be checked on Macaulay 2 by filling in general monomial matrices with the tridegrees
of $\alpha$.
Now the only way $\alpha$ can decompose into nonnegative diagrams which
are not on its ray, may be worked out to be as follows.
\begin{eqnarray}
\notag &[ & c_1( (2,1,0) + (0,2,1) + (1,0,2) - (1,1,1)) \\
\notag  &+& c_2 ((1,2,0) + (0,1,2) +
(2,0,1) - (1,1,1)) + c_3 (1,1,1)] \beta 
\end{eqnarray}
where $c_1 = c_2 = c_3$.  But the same argument used to show that
the diagram corresponding to the first term is not a resolution may be used
to show that a linear combination as above is the diagram of a resolution only if
$c_1 = c_2 = c_3$ is a positive integer.
\end{proof}

It would be interesting to know if there are other extremal rays in the cone $P$
apart from the translates of $\alpha$ and $\beta$.

\end{document}